\theoremstyle{theorem}
\newtheorem{theorem}{Theorem}[section]
\newtheorem{lemma}[theorem]{Lemma}
\newtheorem{cor}[theorem]{Corollary}
\theoremstyle{definition}
\newtheorem{example}[theorem]{Example}
\theoremstyle{remark}
\newtheorem{remark}[theorem]{Remark}
\newcommand{\F}{\ensuremath{\mathbb F}}
\newcommand{\Q}{\ensuremath{\mathbb Q}}
\newcommand{\Z}{\ensuremath{\mathbb Z}}
\newcommand{\I}{\ensuremath{\mathcal I}}
\DeclareMathOperator{\ord}{ord}
\DeclareMathOperator{\lcm}{lcm}
\numberwithin{equation}{section}
\begin{document}

\title[Theorems of Chevalley-Warning and Ax-Katz]{A generalization of the theorems of Chevalley-Warning and Ax-Katz\\ via polynomial substitutions}

\author{Ioulia N. Baoulina} 
\address{Department of Mathematics, Moscow State Pedagogical University, Krasnoprudnaya str. 14, Moscow 107140, Russia}
\email{jbaulina@mail.ru}
\author{Anurag Bishnoi}
\address{Freie Universit\"at Berlin, Institut f\"ur Mathematik, Arnimallee 3, 14195 Berlin, Germany.}
\email{anurag.2357@gmail.com}
\author{Pete L. Clark}
\address{Department of Mathematics, 1023 D. W. Brooks Drive, Athens, GA 30605, USA}
\email{plclark@gmail.com}

\subjclass[2010]{Primary 11T06; Secondary 11D79, 11G25} 

\date{\today}

\commby{}

\begin{abstract}
We give conditions under which the number of solutions of a system of polynomial equations over a finite field $\F_q$ of characteristic~$p$ is divisible by $p$.  Our setup involves the substitution $t_i \mapsto f(t_i)$ for auxiliary polynomials $f_1,\dots,f_n \in \F_q[t]$.  We recover as special cases results of Chevalley-Warning and Morlaye-Joly.  Then we investigate higher $p$-adic divisibilities, proving a 
result that recovers the Ax-Katz Theorem.  We also consider $p$-weight degrees, recovering work of Moreno-Moreno, Moreno-Castro and Castro-Castro-Velez.
\end{abstract}

\maketitle

\section{Introduction}

We denote the positive integers by $\Z^+$ and the nonnegative integers by $\Z^{\geq 0}$.  We make the standard combinatorial convention that $0^0 = 1$.   Let $\F_q$ be a finite field of order $q = p^s$ and let $\F_q^{\times}=\F_q \setminus \{0\}$.

\subsection{Generalizing the Chevalley-Warning theorem}
We begin by recalling the following classical result.

\begin{theorem}[Chevalley-Warning \cite{Chevalley35}, \cite{Warning35}]
\label{CHEVWARNINGTHM}
Let $P_1,\dots,P_r \in \F_q[t_1,\dots,t_n]$ be nonzero polynomials.  Suppose that $\sum_{j=1}^r \deg(P_j) < n$.  Then
\[p \mid \#  \{ (y_1,\dots,y_n) \in \F_q^n \mid P_j(y_1,\dots,y_n) = 0 \text{ for all } 1 \leq j \leq r \}. \]
\end{theorem}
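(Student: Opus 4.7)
The plan is to use the standard ``indicator polynomial'' argument, which exploits Fermat's little theorem together with the vanishing of low-degree power sums over $\F_q$. First I would form the polynomial
\[ Q(t_1,\dots,t_n) = \prod_{j=1}^r \bigl(1 - P_j(t_1,\dots,t_n)^{q-1}\bigr) \in \F_q[t_1,\dots,t_n]. \]
Using the convention $0^0 = 1$ and the fact that $a^{q-1} = 1$ for every $a \in \F_q^{\times}$, one sees that $Q(y) = 1$ if $y$ is a common zero of $P_1,\dots,P_r$ and $Q(y) = 0$ otherwise. Hence, viewing the common solution count $N$ modulo $p$, we get the congruence
\[ N \equiv \sum_{y \in \F_q^n} Q(y) \pmod{p}. \]

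Next I would bound the total degree: $\deg(Q) \leq (q-1) \sum_{j=1}^{r} \deg(P_j) < (q-1) n$ by hypothesis. I would then expand $Q$ as an $\F_q$-linear combination of monomials $t_1^{a_1} \cdots t_n^{a_n}$ with $a_1 + \cdots + a_n < (q-1)n$. For each such monomial I would use the elementary identity
\[ \sum_{y \in \F_q^n} y_1^{a_1} \cdots y_n^{a_n} = \prod_{i=1}^n \sum_{y_i \in \F_q} y_i^{a_i}, \]
together with the fact that $\sum_{y \in \F_q} y^a$ equals $0$ when $a = 0$ (since $|\F_q| = q \equiv 0 \pmod p$) or when $0 < a$ is not divisible by $q-1$, and equals $-1$ only when $a$ is a positive multiple of $q-1$.

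The key combinatorial step is the pigeonhole observation that since $a_1 + \cdots + a_n < (q-1)n$, some index $i$ must satisfy $a_i < q-1$, so $a_i$ cannot be a positive multiple of $q-1$. Therefore the inner sum in that coordinate is $0$, which makes every monomial contribute $0$ to $\sum_{y \in \F_q^n} Q(y)$. Combined with the congruence above, this yields $N \equiv 0 \pmod p$, as required. I do not anticipate a genuine obstacle here; the only subtlety is keeping careful track of the two cases in the one-variable sum $\sum_{y \in \F_q} y^a$ and ensuring the strict inequality in the degree bound is used to exclude the case $a_i = 0$ for all $i$ as well as the case that every $a_i$ is a positive multiple of $q-1$.
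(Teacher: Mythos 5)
Your proof is correct, and it is essentially the same argument the paper uses (in \S4, ``following Ax'') to prove the more general Theorem~\ref{MAINTHM}: one forms the indicator polynomial $\chi = \prod_j(1-P_j^{q-1})$, expands into monomials, factors the $n$-fold sum, and observes that a strict total-degree bound below $n(q-1)$ forces some exponent $a_i<q-1$, killing the factor $\sum_{y\in\F_q}y^{a_i}$. In the paper's version the coordinate functions $t_i$ are replaced by arbitrary $f_i(t_i)$ and $q-1$ is replaced by $u(f_i)$ (this is Lemma~\ref{AMAZINGLEMMA}); specializing $f_i(t)=t$, so that $u(f_i)=q-1$, recovers exactly your monomial-by-monomial vanishing step. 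Your handling of the edge case $a_i=0$ (where the one-variable sum is $q\equiv 0\pmod p$ rather than falling under \eqref{THEFACT}, which is stated only for $m\in\Z^+$) is the same refinement needed in the paper's Lemma~\ref{AMAZINGLEMMA}, and you have addressed it correctly.
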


A polynomial $f \in \F_q[t]$ is a \textbf{permutation polynomial} if the associated evaluation map $E(f): \F_q \rightarrow \F_q$ given by $x \mapsto f(x)$ is a bijection.  The following result is an immediate consequence of Theorem~\ref{CHEVWARNINGTHM}.

\begin{cor}
Let $P_1,\dots,P_r \in \F_q[t_1,\dots,t_n]$ be nonzero polynomials, and let $f_1,\dots,f_n \in \F_q[t]$ be permutation polynomials.  Suppose that $\sum_{j=1}^r \deg(P_j) < n$.  
Then $p \mid \# \{ (x_1,\dots,x_n) \in \F_q^n \mid P_j(f_1(x_1),\dots,f_n(x_n)) = 0 \text{ for all } 1 \leq j \leq r \}$. 
\end{cor}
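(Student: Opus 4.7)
The plan is a very short reduction to Theorem \ref{CHEVWARNINGTHM}. Since each $f_i \in \F_q[t]$ is a permutation polynomial, the induced evaluation map $E(f_i): \F_q \to \F_q$ is a bijection, and consequently the product map
\[ \Phi: \F_q^n \to \F_q^n, \qquad (x_1,\dots,x_n) \mapsto (f_1(x_1),\dots,f_n(x_n)) \]
is a bijection of $\F_q^n$.

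Next, I would observe that $\Phi$ restricts to a bijection between the two solution sets in question: the set
\[ S := \{(x_1,\dots,x_n) \in \F_q^n \mid P_j(f_1(x_1),\dots,f_n(x_n))=0 \text{ for all } 1 \leq j \leq r\} \]
is precisely $\Phi^{-1}(T)$, where
\[ T := \{(y_1,\dots,y_n) \in \F_q^n \mid P_j(y_1,\dots,y_n)=0 \text{ for all } 1 \leq j \leq r\}. \]
Because $\Phi$ is a bijection, $\#S = \#T$.

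Finally, since the hypotheses $P_j \neq 0$ and $\sum_{j=1}^r \deg(P_j) < n$ transfer unchanged from the corollary to the Chevalley-Warning setting, Theorem \ref{CHEVWARNINGTHM} applies to $T$ and yields $p \mid \#T = \#S$, which is the desired conclusion. There is no real obstacle here; the only thing to verify carefully is that a permutation polynomial indeed induces a bijection on $\F_q$ (which is the definition given immediately above the corollary), and that the bijection on $\F_q^n$ matches the two solution sets, which is immediate from the definitions.
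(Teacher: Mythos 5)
Your reduction is correct and is exactly the ``immediate consequence'' the paper has in mind: the permutation polynomials induce a bijection $\Phi$ of $\F_q^n$ matching the two solution sets, so their cardinalities agree and Theorem~\ref{CHEVWARNINGTHM} applies directly. No issues.
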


In this paper we will give a sufficient condition for $p$ to divide
\[ \# \{ (x_1,\dots,x_n) \in \F_q^n \mid P_j(f_1(x_1),\dots,f_n(x_n)) = 0 \text{ for all } 1 \leq j \leq r\}, \]
where $f_1,\dots,f_n \in \F_q[t]$ are \emph{any} polynomials.  For $f \in \F_q[t]$, let $u(f)$ 
be the least $\delta \in \Z^+$ such that $\sum_{x \in \F_q} f(x)^{\delta} \neq 0$ if such a $\delta$ exists; otherwise let 
$u(f) = \infty$.  Wan, Shiue and Chen~\cite{WSC93} showed that $u(f) < \infty$ implies $u(f) \leq \# f(\F_q)-1$.  From this and 
the standard fact that for $m\in\Z^+$ 
\begin{equation}
\label{THEFACT} \sum_{x \in \F_q} x^m = \begin{cases} -1 & \text{ if } (q-1) \mid m, \\ 0 & \text{ otherwise,} \end{cases}
\end{equation}
we see that $u(f) = q-1$ iff $f$ is a permutation polynomial.    We say that $f$ is a 
\textbf{WSC polynomial} if $u(f) = \# f(\F_q)-1$ and that $f$ is a \textbf{weakly WSC polynomial} if $u(f) < \infty$.  

\begin{example}
\label{MONOEX}
Let $m \in \Z^+$ and $f(t) = t^m$.  Put $d = \gcd(m,q-1)$.  Then $f(\F_q) = \{x^m \mid x \in \F_q\}$  $= \{x^d \mid x \in \F_q\}$,
so $\# f(\F_q) = \frac{q-1}{d} + 1$.  Using \eqref{THEFACT} we get $\sum_{x \in \F_q} f(x)^{\delta} = \sum_{x \in \F_q} x^{m \delta} \neq 0$ iff $(q-1) \mid m \delta$, so 
$m\cdot u(f) = \lcm(m,q-1)$ and $u(f) = (q-1)/d$.  Thus $f$ is a WSC polynomial. 
\end{example}

Let $\I \subseteq \{1,\dots,n\}$ be nonempty.  For a monomial $a t_1^{m_1} \cdots t_n^{m_n}$ 
with $a \in \F_q^{\times}$, we define the $\I$-degree $\deg_{\I}(a t_1^{m_1} \cdots t_n^{m_n}) \coloneqq \sum_{i \in \I} m_i$.  For $P \in \F_q[t_1,\dots,t_n]$, we define the $\I$-degree $\deg_{\I}(P)$ to be the maximum of the $\I$-degrees of its monomial terms (and $\deg_{\I}(0) = -\infty$). 

Here is our first main result.

\begin{theorem}
\label{MAINTHM}
Let $P_1,\dots,P_r \in \F_q[t_1,\dots,t_n]$ be nonzero polynomials, and let $f_1,\dots,f_n \in \F_q[t]$ be any polynomials. Let $\I\subseteq\{1,\dots,n\}$ be a nonempty subset. Suppose that
\begin{equation}
\label{MAINTHMEQ1}
(q-1) \sum_{j=1}^r \deg_{\I}(P_j) < \sum_{i\in\I} u(f_i).  
\end{equation}
Then $p\mid \# \{ (x_1,\dots,x_n) \in \F_q^n \mid P_j(f_1(x_1),\ldots,f_n(x_n)) = 0 \text{ for all } 1 \leq j \leq r\}$.
\end{theorem}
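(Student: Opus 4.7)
The plan is to use the standard indicator-function reduction of the Chevalley--Warning circle of ideas, and then exploit the definition of $u(f)$ to kill every monomial in the resulting expansion.

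First, let $N = \# \{ x \in \F_q^n \mid P_j(f_1(x_1),\dots,f_n(x_n)) = 0 \text{ for all } j\}$. Using $y^{q-1} = 1$ for $y \in \F_q^\times$ and $0^{q-1} = 0$, the polynomial $1 - y^{q-1}$ is the characteristic function of $\{0\} \subseteq \F_q$, and so
\[
N \equiv \sum_{x \in \F_q^n} \prod_{j=1}^r \bigl(1 - P_j(f_1(x_1),\dots,f_n(x_n))^{q-1}\bigr) \pmod{p}.
\]
Set $Q(t_1,\dots,t_n) \coloneqq \prod_{j=1}^r (1 - P_j^{q-1}) \in \F_q[t_1,\dots,t_n]$. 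A direct accounting of $\I$-degrees gives
\[
\deg_{\I}(Q) \leq (q-1) \sum_{j=1}^r \deg_{\I}(P_j).
\]

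Next, expand $Q$ as $\sum_{\mathbf{m}} c_{\mathbf{m}} t_1^{m_1} \cdots t_n^{m_n}$ and interchange the sums, so that
\[
N \equiv \sum_{\mathbf{m}} c_{\mathbf{m}} \prod_{i=1}^n \Bigl(\sum_{x_i \in \F_q} f_i(x_i)^{m_i}\Bigr) \pmod{p}.
\]
It suffices to prove that every monomial $\mathbf{m} = (m_1,\dots,m_n)$ with $c_{\mathbf{m}} \neq 0$ contributes $0$. For such an $\mathbf{m}$ we have $\sum_{i \in \I} m_i \leq \deg_{\I}(Q) \leq (q-1) \sum_j \deg_{\I}(P_j) < \sum_{i \in \I} u(f_i)$ by hypothesis \eqref{MAINTHMEQ1}, so by a pigeonhole argument there exists $i_0 \in \I$ with $m_{i_0} < u(f_{i_0})$.

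Finally I handle the two cases for this index $i_0$: if $m_{i_0} \geq 1$, then by the very definition of $u(f_{i_0})$ as the \emph{least} positive $\delta$ with nonvanishing power sum, $\sum_{x \in \F_q} f_{i_0}(x)^{m_{i_0}} = 0$ in $\F_q$; if $m_{i_0} = 0$, then under the convention $0^0 = 1$ the inner sum equals $q = p^s = 0$ in $\F_q$. Either way the corresponding factor is $0$, so the whole monomial contributes $0$ and $N \equiv 0 \pmod{p}$, as required. No step is really hard; the only subtlety is to remember to treat $m_{i_0} = 0$ separately (using characteristic $p$) rather than via $u(f_{i_0})$, which by definition only controls positive exponents.
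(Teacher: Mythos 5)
Your proof is correct and takes essentially the same Ax-style route as the paper: the same indicator polynomial $\prod_j(1-P_j^{q-1})$, the same $\I$-degree bound, and the same monomial-by-monomial pigeonhole (which in the paper is packaged as Lemma~\ref{AMAZINGLEMMA}). Your explicit treatment of the exponent-zero case (using $q=0$ in $\F_q$ rather than the definition of $u(f_{i_0})$, which only governs positive exponents) is a welcome bit of care that the paper's Lemma~\ref{AMAZINGLEMMA} leaves implicit.
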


\begin{remark}
\label{NONWEAKREMARK}
A polynomial $f \in \F_q[t]$ is \emph{not} weakly WSC iff every fiber of the map $x \mapsto f(x)$ has size a multiple of $p$ \cite[Remark~2.3]{WSC93}, \cite[\S2.3]{Turnwald95}.  It follows that the conclusion of Theorem~\ref{MAINTHM} holds whenever some $f_i$ is not 
weakly WSC, without any hypotheses on the $\I$-degrees of the polynomials $P_j$.  
\end{remark}

Turnwald showed \cite[Proposition~2.3(d)]{Turnwald95} that for nonconstant $f \in \F_q[t]$, we have 
$u(f) \geq \frac{q-1}{\deg(f)}$ and thus also $u(f) \geq \bigl\lceil \frac{q-1}{\deg(f)} \bigr\rceil$.  So Theorem~\ref{MAINTHM} implies:

\begin{cor}
\label{COR2MAINTHM}
Let $P_1,\dots,P_r \in \F_q[t_1,\dots,t_n]$ be nonzero polynomials, and let\linebreak $\I\subseteq\{1,\dots,n\}$ be nonempty. Let $f_1,\dots,f_n \in \F_q[t]$ and assume that $f_i$ is nonconstant for all $i\in\I$. Suppose that
$(q-1)\sum_{j=1}^r \deg_{\I}(P_j) < \sum_{i\in\I} \bigl\lceil \frac{q-1}{\deg(f_i)} \bigr\rceil$. Then 
$p\mid \# \{ (x_1,\dots,x_n) \in \F_q^n \mid P_j(f_1(x_1),\dots,f_n(x_n)) = 0 \text{ for all } 1 \leq j \leq r\}$.
\end{cor}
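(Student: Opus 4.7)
The plan is to derive Corollary \ref{COR2MAINTHM} as an essentially immediate consequence of Theorem \ref{MAINTHM}, using the Turnwald bound $u(f) \geq \lceil (q-1)/\deg(f) \rceil$ for nonconstant $f \in \F_q[t]$ that is stated just before the corollary. There is no genuinely new content: the goal is only to verify that the hypothesis of the corollary implies the hypothesis \eqref{MAINTHMEQ1} of the theorem.

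Concretely, I would proceed as follows. First, fix the nonempty $\I \subseteq \{1,\dots,n\}$ and the polynomials $P_j$ and $f_i$ as in the statement of the corollary, and recall that by assumption $f_i$ is nonconstant for every $i \in \I$. Applying Turnwald's inequality coordinate by coordinate yields
\[
\sum_{i\in\I} u(f_i) \;\geq\; \sum_{i\in\I} \bigl\lceil \tfrac{q-1}{\deg(f_i)} \bigr\rceil,
\]
where the convention $u(f_i) = \infty$ when $f_i$ is not weakly WSC keeps the inequality valid in that degenerate case as well. Combining this with the hypothesis of the corollary gives
\[
(q-1)\sum_{j=1}^r \deg_{\I}(P_j) \;<\; \sum_{i\in\I} \bigl\lceil \tfrac{q-1}{\deg(f_i)} \bigr\rceil \;\leq\; \sum_{i\in\I} u(f_i),
\]
which is precisely \eqref{MAINTHMEQ1}. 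Theorem \ref{MAINTHM} then delivers the desired divisibility.

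There is essentially no obstacle here: all the work has been done in Theorem \ref{MAINTHM} and in Turnwald's bound. The only thing worth flagging is the need to allow $u(f_i) = \infty$, so that the argument goes through uniformly whether or not some $f_i$ with $i \in \I$ fails to be weakly WSC; in the latter case the conclusion also follows directly from Remark \ref{NONWEAKREMARK}, giving a consistency check.
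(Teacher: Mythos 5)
Your proposal is correct and follows exactly the route the paper takes: apply Turnwald's bound $u(f_i) \geq \bigl\lceil \frac{q-1}{\deg(f_i)} \bigr\rceil$ to each $i \in \I$, sum, and check that the hypothesis of the corollary implies \eqref{MAINTHMEQ1} so that Theorem~\ref{MAINTHM} applies. The remark about the $u(f_i)=\infty$ case is a sensible sanity check but adds nothing beyond what the paper already presents.
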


From Theorem~\ref{MAINTHM} and the definition of a WSC polynomial, we get:
\begin{cor}
\label{CORMAINTHMNEW}
Let $P_1,\dots,P_r \in \F_q[t_1,\dots,t_n]$ be nonzero polynomials, and let $\I\subseteq\{1,\dots,n\}$ be nonempty.   Let $f_1,\dots,f_n \in \F_q[t]$ with $f_i$ a WSC polynomial for all $i\in\I$. Suppose that
\begin{equation}
\label{MAINTHMEQ}
(q-1) \sum_{j=1}^r \deg_{\I}(P_j) < \sum_{i\in\I} (\# f_i(\F_q)-1).
\end{equation}
Then $p\mid \# \{ (x_1,\dots,x_n) \in \F_q^n \mid P_j(f_1(x_1),\dots,f_n(x_n)) = 0 \text{ for all } 1 \leq j \leq r\}$.
\end{cor}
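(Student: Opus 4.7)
The plan is essentially a one-line deduction from Theorem~\ref{MAINTHM} using the definition of a WSC polynomial, so there is no substantive obstacle here. Nonetheless, let me lay out the steps explicitly.

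First, I would recall that by definition, $f \in \F_q[t]$ is a WSC polynomial precisely when $u(f) = \# f(\F_q) - 1$. Under the hypothesis of the corollary, this identity holds for every $i \in \I$, so
\[
\sum_{i \in \I} u(f_i) = \sum_{i \in \I} \bigl(\# f_i(\F_q) - 1\bigr).
\]
Note that no assumption is needed on $f_i$ for $i \notin \I$, since Theorem~\ref{MAINTHM} only involves $u(f_i)$ for $i \in \I$.

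Next, I would observe that the hypothesis \eqref{MAINTHMEQ} of the corollary therefore coincides with hypothesis \eqref{MAINTHMEQ1} of Theorem~\ref{MAINTHM}. Applying Theorem~\ref{MAINTHM} directly yields the desired $p$-divisibility of $\# \{ (x_1,\dots,x_n) \in \F_q^n \mid P_j(f_1(x_1),\dots,f_n(x_n)) = 0 \text{ for all } 1 \leq j \leq r\}$.

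Since the argument is a direct substitution, the only place a subtlety could arise is whether the inequality truly transfers when $\I$ is a proper subset of $\{1,\dots,n\}$. This is not an obstacle: Theorem~\ref{MAINTHM} already allows $f_i$ for $i \notin \I$ to be arbitrary, so the WSC hypothesis on the indices in $\I$ suffices to match the two inequalities term by term.
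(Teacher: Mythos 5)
Your proof is correct and is exactly the deduction the paper itself makes in the introduction: the sentence immediately preceding Corollary~\ref{CORMAINTHMNEW} reads ``From Theorem~\ref{MAINTHM} and the definition of a WSC polynomial, we get,'' which is precisely your observation that $u(f_i)=\#f_i(\F_q)-1$ for each WSC $f_i$ turns hypothesis~\eqref{MAINTHMEQ} into hypothesis~\eqref{MAINTHMEQ1} of Theorem~\ref{MAINTHM}. Be aware that the paper additionally presents a second, independent proof of this corollary in \S3 (``following Chevalley''), which does not pass through Theorem~\ref{MAINTHM} at all: it reduces to a single block of variables indexed by $\I$, applies the Combinatorial Nullstellensatz--type identity of Lemma~\ref{AUXPROP} over the value sets $f_i(\F_q)$, and then uses the characterization $e(y)\varphi'(y)=C(f)$ of WSC polynomials from Theorem~\ref{BIGWSCTHM} to convert the resulting weighted sum into $\#X/\prod_i C(f_i)$; your one-line deduction is shorter, while the \S3 argument illustrates the independent interest of Theorem~\ref{BIGWSCTHM}.
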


From Corollary~\ref{CORMAINTHMNEW} and Example~\ref{MONOEX} we get:

\begin{cor}
\label{CLASSICALMAINTHM}
Let $P_1,\dots,P_r \in \F_q[t_1,\dots,t_n]$ be nonzero polynomials, and let\linebreak $\I\subseteq\{1,\dots,n\}$ be nonempty. Let $f_1,\dots,f_n \in \F_q[t]$ and assume that $f_i(t)=t^{m_i}$ for $m_i \in \Z^+$ for all $i\in\I$. For $i\in\I$, put $d_i=\gcd(m_i,q-1)$. Suppose that 
\[
\sum_{j=1}^r \deg_{\I}(P_j) < \sum_{i\in\I} \frac 1{d_i}.
\]
Then $p\mid \# \{ (x_1,\dots,x_n) \in \F_q^n \mid P_j(f_1(x_1),\dots,f_n(x_n)) = 0 \text{ for all } 1 \leq j \leq r\}$.
\end{cor}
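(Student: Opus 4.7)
The plan is to deduce this corollary immediately from Corollary~\ref{CORMAINTHMNEW} by inserting the explicit values computed in Example~\ref{MONOEX}. No new ingredients are needed, so what follows is essentially a bookkeeping argument.

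First, for each $i \in \I$, I invoke Example~\ref{MONOEX} applied to $f_i(t) = t^{m_i}$. That example shows $f_i$ is a WSC polynomial and computes $\# f_i(\F_q) = \frac{q-1}{d_i} + 1$, hence $\# f_i(\F_q) - 1 = \frac{q-1}{d_i}$. In particular, the hypothesis of Corollary~\ref{CORMAINTHMNEW} that $f_i$ be WSC for every $i \in \I$ is satisfied, and no condition is needed on $f_i$ for $i \notin \I$.

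Next, I translate the inequality. Substituting $\# f_i(\F_q) - 1 = \frac{q-1}{d_i}$ into the hypothesis \eqref{MAINTHMEQ} of Corollary~\ref{CORMAINTHMNEW} gives
\[
(q-1)\sum_{j=1}^r \deg_{\I}(P_j) < \sum_{i\in\I} \frac{q-1}{d_i} = (q-1)\sum_{i\in\I} \frac{1}{d_i}.
\]
Dividing through by the positive integer $q-1$ yields precisely the hypothesis $\sum_{j=1}^r \deg_{\I}(P_j) < \sum_{i\in\I} \frac{1}{d_i}$ of the present corollary. Consequently Corollary~\ref{CORMAINTHMNEW} applies and delivers the divisibility conclusion. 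Since each step is a direct rewriting, I expect no genuine obstacle; the only thing to verify is that the inequality is sharp enough to be preserved after dividing by $q-1$, and it is preserved because both sides are rescaled by the same positive factor.
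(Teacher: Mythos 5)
Your proof is correct and matches the paper's approach exactly: the paper also derives Corollary~\ref{CLASSICALMAINTHM} as an immediate consequence of Corollary~\ref{CORMAINTHMNEW} together with the computation in Example~\ref{MONOEX} that $t^{m_i}$ is WSC with $\#f_i(\F_q)-1=(q-1)/d_i$.
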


Taking $\I=\{1,\dots,n\}$, $r = 1$ and $\deg(P_1) = 1$ in Corollary~\ref{CLASSICALMAINTHM} we recover:

\begin{theorem}[Morlaye \cite{Morlaye71}, Joly \cite{Joly71}]
\label{MORLAYEJOLYTHM}
Let $a_1,\dots,a_n \in \F_q^{\times}$, $b \in \F_q$, and $m_1,\dots,m_n \in \Z^+$.  For $1\le i\le n$, put $d_i=\gcd(m_i,q-1)$. Suppose that $\sum_{i=1}^n (1/d_i) > 1$. 
Then $p\mid \# \{(x_1,\dots,x_n) \in \F_q^n \mid a_1^{} x_1^{m_1} + \dots + a_n^{} x_n^{m_n} = b\}$.
\end{theorem}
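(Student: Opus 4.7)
The plan is to observe that the theorem is an immediate specialization of Corollary~\ref{CLASSICALMAINTHM}, following the recipe given in the sentence preceding the statement. Concretely, I would take $\I = \{1,\ldots,n\}$, $r = 1$, $f_i(t) = t^{m_i}$ for $1 \le i \le n$, and the single polynomial
\[
P_1(t_1,\ldots,t_n) = a_1 t_1 + \cdots + a_n t_n - b.
\]
With these choices, the equation $P_1(f_1(x_1),\ldots,f_n(x_n)) = 0$ is literally $a_1 x_1^{m_1} + \cdots + a_n x_n^{m_n} = b$, so the counts in the two statements match.

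Next I would verify the hypotheses of Corollary~\ref{CLASSICALMAINTHM}. Since every $a_i$ lies in $\F_q^{\times}$, the polynomial $P_1$ is nonzero. Its nonzero monomial terms are the linear terms $a_i t_i$ (each of $\I$-degree $1$) and, if $b \ne 0$, the constant term $-b$ (of $\I$-degree $0$); hence $\deg_{\I}(P_1) = 1$. The inequality
\[
\sum_{j=1}^{r} \deg_{\I}(P_j) < \sum_{i \in \I} \frac{1}{d_i}
\]
therefore reduces to $1 < \sum_{i=1}^n (1/d_i)$, which is precisely the assumption $\sum_{i=1}^n (1/d_i) > 1$ of Morlaye-Joly. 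Invoking Corollary~\ref{CLASSICALMAINTHM} then gives the desired $p$-divisibility.

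There is no real obstacle in this derivation: the substantive input, namely the computation from Example~\ref{MONOEX} that $u(t^{m_i}) = (q-1)/d_i$ and that $t^{m_i}$ is a WSC polynomial, has already been packaged into Corollary~\ref{CLASSICALMAINTHM}. All that the Morlaye-Joly derivation requires is the recognition of the linear equation $a_1 x_1^{m_1} + \cdots + a_n x_n^{m_n} = b$ as the image, under the substitution $t_i \mapsto t_i^{m_i}$, of a polynomial of $\I$-degree~$1$.
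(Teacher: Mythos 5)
Your derivation is exactly the specialization the paper itself uses: it obtains Morlaye--Joly by taking $\I=\{1,\dots,n\}$, $r=1$, and a degree-one $P_1$ in Corollary~\ref{CLASSICALMAINTHM}, with the content about $u(t^{m_i})=(q-1)/d_i$ already packaged via Example~\ref{MONOEX}. The proposal is correct and matches the paper's approach.
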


\subsection{Higher $\bm{p}$-adic Divisibilities}
Under the assumptions of Theorem~\ref{CHEVWARNINGTHM},  Warning~\cite{Warning35} also proved that either 
\[
\{ (y_1,\dots,y_n) \in \F_q^n \mid P_j(y_1,\dots,y_n) = 0 \text{ for all } 1 \leq j \leq r\}=\varnothing
\]
or
\[
\#  \{ (y_1,\dots,y_n) \in \F_q^n \mid P_j(y_1,\dots,y_n) = 0 \text{ for all } 1 \leq j \leq r\}\geq q^{n- \sum_{j=1}^r \deg(P_j)}.
\]
Theorem~\ref{CHEVWARNINGTHM} and this second theorem of Warning raise the following questions:
\begin{itemize}
\item[(Q1)] Do we always have 
\[
q \mid \#  \{ (y_1,\dots,y_n) \in \F_q^n \mid P_j(y_1,\dots,y_n)=0 \text{ for all } 1 \leq j \leq r\} ?
\] 
\item[(Q2)] For fixed $n$, $r$ and $\deg(P_1),\dots,\deg(P_r)$, what is the largest power of $p$ that always divides $\#  \{ (y_1,\dots,y_n) \in \F_q^n \mid P_j(y_1,\dots,y_n) = 0 \text{ for all } 1 \leq j \leq r\}$?
\end{itemize}
Ax~\cite{Ax64} answered (Q1) and, when $r = 1$, (Q2).  Katz~\cite{Katz71} fully answered (Q2).

\begin{theorem}[Ax-Katz \cite{Ax64}, \cite{Katz71}]
\label{AXKATZTHM}
\textup{(a)}
Let $P_1,\ldots,P_r \in \F_q[t_1,\ldots,t_n]$ be polynomials of positive degree.  Suppose that $\sum_{j=1}^r \deg(P_j) < n$. Then 
\[
q^{\lceil (n-\sum_{j=1}^r \deg(P_j))/\max_{1 \leq j \leq r} \deg(P_j) \rceil}
\] 
divides $\# \{(y_1,\dots,y_n) \in \F_q^n \mid P_j(y_1,\dots,y_n) = 0 \text{ for all } 1 \leq j \leq r\}$.

\textup{(b)}
For all $n,r,m_1,\ldots,m_r \in \Z^+$ there are $P_1,\ldots,P_r \in \F_q[t_1,\ldots,t_n]$ with $\deg(P_j) = m_j$ for $1 \leq j \leq r$ such that
\begin{align*}
\ord_p&\bigl(\# \{(y_1,\dots,y_n) \in \F_q^n \mid P_j(y_1,\dots,y_n) = 0  \text{ for all } 1 \leq j \leq r\}\bigr)\\
&= s\biggl{\lceil} \frac{n-\sum_{j=1}^r \deg(P_j)}{\max_{1 \leq j \leq r} \deg(P_j)} \biggr{\rceil}, 
\end{align*}
where $q=p^s$.
\end{theorem}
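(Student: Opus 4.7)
My plan is to derive Theorem~\ref{AXKATZTHM}(a) from a higher-divisibility analogue of Theorem~\ref{MAINTHM}, and to handle (b) via an explicit construction.

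For (a), I would start from the standard indicator identity
\[
N \;=\; \sum_{y \in \F_q^n} \prod_{j=1}^r \bigl(1 - P_j(y)^{q-1}\bigr),
\]
where $N$ is the number of common zeros (the convention $0^0 = 1$ makes the product equal $1$ exactly on the zero locus). Expanding yields a signed sum of $\sum_y \prod_{j \in S} P_j(y)^{q-1}$ over $S \subseteq \{1,\ldots,r\}$. For each $S$ I would expand into monomials and apply \eqref{THEFACT} coordinatewise, killing every monomial whose exponent vector fails componentwise divisibility by $q-1$. A surviving monomial has total degree a multiple of $q-1$ and at most $(q-1)\sum_{j\in S}\deg(P_j)$; since each nonzero exponent is at least $q-1$, its support has size at most $\sum_{j \in S}\deg(P_j)$. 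Combined with the gap $n - \sum_{j=1}^r \deg(P_j)$, this sparsity is what forces divisibility by a high power of $p$.

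To extract the precise exponent $sm$ with $m = \lceil (n - \sum_j \deg(P_j))/\max_j \deg(P_j) \rceil$, mod-$p$ arguments do not suffice. I would lift the computation to the Witt vectors $\Z_q$ via Teichm\"uller representatives, rewriting each monomial sum as a product of Gauss sums. Stickelberger's congruence---which gives $\ord_p$ of a Gauss sum in terms of the base-$p$ digit sum of its character's parameter---then converts the combinatorial sparsity into a $p$-adic valuation estimate. Katz's refinement, i.e.\ having $\max_j \deg(P_j)$ rather than $\sum_j \deg(P_j)$ in the denominator, emerges by optimizing the digit-sum bound over how the total degree $(q-1)\sum_{j\in S}\deg(P_j)$ can be distributed among the $n$ variables. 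For part (b), I would use extremal constructions going back to Ax (and made explicit in subsequent work), based on polynomials whose common zero counts admit closed-form evaluation via Gauss sums and can be verified to saturate the bound.

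The main obstacle is bridging mod-$p$ to $p$-adic divisibility: Theorem~\ref{MAINTHM} delivers only a single factor of $p$, and promoting this to $q^m$ requires genuine $p$-adic analysis (Teichm\"uller lifts plus Stickelberger's theorem) rather than manipulations in $\F_q$ alone. A secondary subtlety is the Katz refinement itself: replacing $\sum_j \deg(P_j)$ by $\max_j \deg(P_j)$ in the denominator demands a careful grouping of digit-sum contributions that organizes surviving monomials by their largest-degree constituent, which is precisely where Katz's argument improves on Ax's.
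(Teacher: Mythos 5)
Your high-level architecture is sound and your identification of the key difficulty is accurate, but the tool you reach for at the crucial step is not the one the paper uses. The paper does not prove Theorem~\ref{AXKATZTHM} directly; it recovers part (a) by specializing Corollary~\ref{AXKATZGENERALCOR} (take $\I=\{1,\dots,n\}$ and $m_i=1$), which in turn follows from Theorem~\ref{AXKATZGENERAL}, whose proof occupies \S 5. That proof follows Wan's ``Chevalley--Warning approach'' \cite{Wan95} and uses \emph{no} Gauss sums, characters, or Stickelberger's theorem. Instead, after lifting to $\mathcal{O}_{K_q}$ via Teichm\"uller representatives and replacing the exponent $q-1$ by $(q-1)q^n$ (Lemma~\ref{NEWLEMMA2}, so that the indicator identity holds $\bmod\ q^n$ rather than $\bmod\ p$), the count is expanded directly by the multinomial theorem. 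The $p$-adic valuation of the resulting multinomial coefficients is then controlled through Legendre's formula $\ord_p(m!)=(m-\sigma_p(m))/(p-1)$ together with the digit-sum inequalities of Lemma~\ref{MORENOSHUM} and the identity $\sum_{\nu=0}^{s-1}\sigma_q(\gamma p^\nu)=\tfrac{q-1}{p-1}\sigma_p(\gamma)$. Your Stickelberger route and the paper's Legendre-formula route carry essentially the same digit-sum content, but your version routes it through Gauss sums (as Ax and Katz originally did), whereas the paper's is ``character-free'' and, crucially, is set up to handle the auxiliary substitutions $t_i\mapsto f_i(t_i)$ via Lemma~\ref{ZANCAO} -- a generality that would be cumbersome to graft onto the Gauss-sum formulation.

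Two further points. First, a small inaccuracy in your sketch: the monomial sums $\sum_{x\in T_q} x^m$ over Teichm\"uller representatives are not Gauss sums; for $m>0$ they evaluate exactly to $q-1$ when $(q-1)\mid m$ and to $0$ otherwise, which is why the paper's direct expansion works without invoking characters at all. Your framing (``rewriting each monomial sum as a product of Gauss sums'') suggests a detour you do not actually need. Second, part (b) of Theorem~\ref{AXKATZTHM} is not proved in the paper at all -- it is simply cited from \cite{Katz71}. Your plan to ``use extremal constructions going back to Ax'' is in the right spirit for a classical treatment, but it is outside the scope of what the paper re-derives, and your sketch for it is too vague to assess as a proof.
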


In the setting of Theorem~\ref{MORLAYEJOLYTHM}, Joly conjectured that the analogue of (Q1) has an affirmative answer, i.e., that $q\mid \# \{(x_1,\dots,x_n) \in \F_q^n \mid a_1^{} x_1^{m_1} + \dots + a_n^{} x_n^{m_n} = b\}$.  The following result of Wan affirms Joly's conjecture and also addresses (Q2).

\begin{theorem}[Wan~\cite{Wan88}]  
\label{WAN88THM}
Let $a_1,\dots,a_n\! \in \F_q^{\times}$, $b \in\! \F_q$, and $m_1,\dots,m_n \in \Z^+$.  For $1\le i\le n$, put $d_i=\gcd(m_i,q-1)$. Then
$$
q^{\lceil \sum_{i=1}^n (1/d_i)-1\rceil}\mid \# \{(x_1,\dots,x_n) \in \F_q^n \mid a_1^{} x_1^{m_1} + \dots + a_n^{} x_n^{m_n} = b\}.
$$
\end{theorem}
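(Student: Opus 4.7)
The plan is to expand $N$ via additive characters of $\F_q$ and to control the resulting Gauss sums $p$-adically using Stickelberger's theorem. The main ingredients are a fiber-size reduction, the standard character-sum formula for diagonal equations, Stickelberger's bound on Gauss sums, and a refined Galois/Jacobi sum analysis to convert the real-valued valuation estimate into the integer bound demanded by the theorem.

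First I would reduce to the case $m_i\mid q-1$ for all $i$. The maps $x\mapsto x^{m_i}$ and $x\mapsto x^{d_i}$ on $\F_q$ have identical images and equal fiber sizes (size $1$ at $0$ and size $d_i$ at each nonzero $d_i$-th power), so $N$ is unchanged under the replacement $m_i\mapsto d_i$. Fix a nontrivial additive character $\psi\colon\F_q\to\overline{\Q}_p^{\,\times}$. By orthogonality and the standard expansion $\sum_{x\in\F_q}\psi(cx^{d_i})=\sum_{\chi_i^{d_i}=\epsilon,\,\chi_i\neq\epsilon}\bar\chi_i(c)\,g(\chi_i)$ for $c\in\F_q^\times$, one obtains
\[
N \;=\; q^{n-1} + \frac{1}{q}\sum_{\vec\chi}\Bigl(\prod_{i=1}^n \bar\chi_i(a_i)\,g(\chi_i)\Bigr)\,T(\vec\chi,b),
\]
where $\vec\chi$ ranges over tuples of nontrivial characters satisfying $\chi_i^{d_i}=\epsilon$, and $T(\vec\chi,b)$ equals $\pm(q-1)$, $\pm 1$, or $\chi(-b)\,g(\bar\chi)$ with $\chi=\chi_1\cdots\chi_n$, according to whether $b=0$ and whether $\chi$ is trivial.

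Next I would invoke Stickelberger's theorem: for any nontrivial character $\chi$ of $\F_q^\times$ of order $d\mid q-1$, $\ord_p(g(\chi))\geq s/d$ under the normalization $\ord_p(p)=1$. Consequently each tuple $\vec\chi$ contributes a term of $p$-adic valuation at least $s\bigl(\sum_{i=1}^n 1/d_i-1\bigr)$ (the $-s$ arising from the $1/q$ prefactor), while $T(\vec\chi,b)$ contributes nonnegative valuation. Since $\sum 1/d_i\leq n$ one also has $\ord_p(q^{n-1})=s(n-1)\geq s\lceil\sum 1/d_i-1\rceil$, and the outcome is the real-valued bound $\ord_p(N)\geq s\bigl(\sum 1/d_i-1\bigr)$.

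The main obstacle is sharpening this fractional bound to the integer bound $\ord_p(N)\geq s\lceil\sum 1/d_i-1\rceil$ demanded by the theorem: one cannot merely round up, since $\lceil sx\rceil$ can be strictly smaller than $s\lceil x\rceil$ when $x$ is non-integer. The remedy is to exploit additional structure. In every case the effective character product (including the factor $g(\bar\chi)$ coming from $T(\vec\chi,b)$ when $b\neq 0$ and $\chi\neq\epsilon$) is trivial, so repeated application of the identity $g(\rho)g(\bar\rho)=\rho(-1)\,q$ rewrites each term as an explicit power of $q$ times a product of Jacobi sums (each a $p$-adic integer). A Frobenius-orbit averaging, combined with the Gross--Koblitz formula or the full Stickelberger congruence, then shows that each Galois orbit's contribution is divisible by $q^{\lceil\sum 1/d_i-1\rceil}$. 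Carrying out this $p$-adic bookkeeping, together with a case split between $b=0$ and $b\neq 0$, is the technical crux of Wan's original argument in~\cite{Wan88}.
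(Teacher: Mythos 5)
Your proposal takes a genuinely different route from the paper.  In the paper, Theorem~\ref{WAN88THM} is deduced as a special case of Theorem~\ref{AXKATZGENERAL} (via Corollaries~\ref{AXKATZGENERALCOR1} and \ref{AXKATZGENERALCOR}, with $r=1$, $\deg P_1=1$, $f_i(t)=t^{m_i}$), and that theorem is proved by lifting the characteristic function $\prod_j\bigl(1-P_j^{(q-1)q^n}\bigr)$ to Teichm\"uller representatives, expanding by the multinomial theorem, and estimating $p$-adic valuations via Legendre's formula and digit-sum inequalities (Lemma~\ref{MORENOSHUM}).  Your approach is instead the classical Gauss-sum/Stickelberger route of Wan's original paper \cite{Wan88}.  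Both are legitimate; yours is narrower (it would not obviously generalize to Theorem~\ref{AXKATZGENERAL}), but it is the natural path if the only goal is Theorem~\ref{WAN88THM}.

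However, there is a gap at exactly the step you flagged as the ``technical crux,'' and the remedy you describe does not close it.  The Jacobi-sum factorization you invoke (repeated use of $g(\rho)g(\bar\rho)=\rho(-1)q$ when $\chi_1\cdots\chi_n=\epsilon$) rewrites $q^{-1}\prod_i g(\chi_i)$ as a unit times a product of Jacobi sums, hence shows only that each such term has $\ord_p\ge 0$; that proves $q\mid N$ (Joly's conjecture) but falls well short of the full power $q^{\lceil\sum 1/d_i-1\rceil}$.  ``Frobenius-orbit averaging'' also cannot help on its own: Gauss sums at Galois-conjugate characters $\omega^{-a}$ and $\omega^{-pa}$ have equal $p$-adic valuation, since $\sigma_p(pa\bmod(q-1))=\sigma_p(a)$, so orbit sums do not have systematically larger valuation than their individual terms.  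The actual ingredient that produces the ceiling is the exact Stickelberger valuation $\ord_p g(\omega^{-a})=\sigma_p(a)/(p-1)$ rewritten through the identity $\sigma_p(a)=\frac{p-1}{q-1}\sum_{\nu=0}^{s-1}\sigma_q(ap^\nu)$ (see the proof of Theorem~\ref{AXKATZGENERAL} in \S5, citing \cite{Wan95}): for each fixed digit-shift $\nu$, the quantity $\sum_i\sigma_q(a_ip^\nu)$ (plus the contribution of $g(\bar\chi)$ when $b\ne 0$, $\chi\ne\epsilon$) is simultaneously $\ge(q-1)\sum_i 1/d_i$ and a positive multiple of $q-1$, hence $\ge(q-1)\bigl\lceil\sum_i 1/d_i\bigr\rceil$; summing over $\nu$ and dividing by $s(q-1)$ gives $\ord_q\bigl(q^{-1}\prod_i g(\chi_i)\cdot T\bigr)\ge\bigl\lceil\sum_i 1/d_i-1\bigr\rceil$ termwise, with no averaging over characters needed.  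You need to replace the Jacobi-sum/Gross--Koblitz paragraph with this $\sigma_q$-divisibility argument to make the sketch into a proof.
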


It is natural to ask (Q1) and (Q2) in the setting of Theorem~\ref{MAINTHM}. 

\begin{example}
\label{EXAMPLE1}
Let $\mathcal{I}=\{1,\dots,n\}$, $q=p^s$ with $s\geq 2$, and let $n\geq r\geq 1$. Put $P_j(t_1,\dots,t_n)=t_j$ for $1\leq j\leq r-1$, and $P_r(t_1,\dots,t_n)=t_r\cdots t_n$. Put $f_i(t)=t$ for $1\leq i\leq n-1$, and put $f_n(t)=t^p-t$.  The associated map $E(f_n): \F_q \rightarrow \F_q$ is $\F_p$-linear 
with kernel $\F_p$, so all nonempty fibers of $E(f_n)$ have size $p$ and $u(f_n) = \infty$.  Thus the hypothesis \eqref{MAINTHMEQ1} of 
Theorem~\ref{MAINTHM} holds, yet
\begin{align*}
\# &\{ (x_1,\dots,x_n) \in \F_q^n \mid P_j(f_1(x_1),\dots,f_n(x_n)) = 0 \text{ for all } 1 \leq j \leq r\}\\
&=q^{n-r+1}-(q-1)^{n-r}\cdot \#\{x \in \F_q \mid f_n(x)\neq 0\}=q^{n-r+1}-(q-1)^{n-r}(q-p)\\
&\equiv  (-1)^{n-r}p \not\equiv 0\pmod{q}.
\end{align*}
\end{example}

\begin{example}
\label{EXAMPLE2}
Let $q=3^4$, $n=3$, $r=1$, $P_1(t_1,t_2,t_3)=t_1+t_2+t_3$, $f_1(t)=f_2(t)=t^3+t^2+1$, $f_3(t)=t^{13}+t^{11}+t$, $\I=\{1,2,3\}$. Then $\deg_{\I}(P_1)=1$, $u(f_1)=u(f_2)=40$, $u(f_3)=14$, and so $80=(q-1)\deg_{\I}(P_1)<u(f_1)+u(f_2)+u(f_3)=94$. However, $\#\{(x_1,x_2,x_3)\in\F_{3^4}^3
\mid P_1(f_1(x_1),f_2(x_2),f_3(x_3))=0\}=3^3\cdot 13\cdot 19\not\equiv 0\pmod{3^4}$.
\end{example}

However, switching to a different numerical invariant yields a positive answer to (Q1).  For nonconstant $f\in\F_q[t]$, write $f(t)=\sum_{\ell=1}^R b_{\ell}t^{m_{\ell}}$ with $b_{\ell}\in\F_q^{\times}$ and $m_{\ell} \in \Z^{\geq 0}$ (with all $m_\ell$'s pairwise distinct). Define 
\[
\omega(f)\coloneqq \min\biggl\{\sum_{\ell=1}^R \gamma_{\ell}\,\,\Bigl|\Bigr.\,\, 0\le\gamma_1,\dots,\gamma_R\le q-1\text{ and } \sum_{\ell=1}^R m_{\ell}\gamma_{\ell}\in (q-1)\Z^+\biggr\}.
\]
We observe that $1\le \omega(f)\le q-1$.  Now we can prove:

\begin{theorem}
\label{AXKATZGENERAL}
Let $P_1,\dots,P_r \in \F_q[t_1,\dots,t_n]$ be nonzero polynomials, and let\linebreak $\I\subseteq\{1,\dots,n\}$ be a nonempty subset such that $\max_{1\le j\le r}\deg_{\I}(P_j)>0$. Let $f_1,\dots,f_n \in \F_q[t]$ and assume that $f_i$ is nonconstant for all $i\in\I$.  Suppose that $(q-1)\sum_{j=1}^r \deg_{\I}(P_j) < \sum_{i\in\I} \omega(f_i)$.
Then  
\[
q^{\left\lceil\left(\sum_{i\in\I} (\omega(f_i)/(q-1))-\sum_{j=1}^r \deg_{\I}(P_j)\right)/\max_{1\le j\le r}\deg_{\I}(P_j)\right\rceil}
\] 
divides $\# \{ (x_1,\dots,x_n) \in \F_q^n \mid P_j(f_1(x_1),\dots,f_n(x_n)) = 0 \text{ for all } 1 \leq j \leq r\}$.
\end{theorem}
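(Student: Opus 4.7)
My plan is to adapt the additive-character approach that Wan used in his proof of Theorem~\ref{WAN88THM} to the substitution setting; the invariant $\omega(f_i)/(q-1)$ plays the role of the per-variable weight ($1$ in Ax-Katz and $1/d_i$ in Wan's diagonal case), so since $\omega(f_i)=q-1$ exactly when $f_i$ is a permutation polynomial, the result will recover Ax-Katz applied to the polynomials $P_j(f_1(t_1),\dots,f_n(t_n))$. Writing $q=p^s$, the conclusion reads $\mathrm{ord}_p(N)\geq s\mu$.

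I would fix a nontrivial additive character $\chi\colon\F_q\to\mu_p$ and start from the Fourier expansion
\[
N \;=\; q^{-r}\sum_{\mathbf{y}\in\F_q^r}T(\mathbf{y}),\qquad T(\mathbf{y})=\sum_{\mathbf{x}\in\F_q^n}\chi\Bigl(\sum_{j=1}^{r}y_jP_j(f_1(x_1),\dots,f_n(x_n))\Bigr).
\]
For each $j$, I would then expand $\chi(y_jP_j(f(\mathbf{x})))$ via the inverse Fourier identity $\chi(a)=(q-1)^{-1}\sum_{m=0}^{q-2}g(\chi_m^{-1})\tau(a)^m$ for $a\in\F_q^{\times}$, where $\tau$ is the Teichm\"uller lift and $g(\chi_m^{-1})$ are Gauss sums whose $p$-adic valuations are controlled by Stickelberger's theorem. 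After interchanging sums, the contribution indexed by a tuple $(m_1,\dots,m_r)$ carries a weight $\prod_j g(\chi_{m_j}^{-1})$ and an inner sum $\sum_{\mathbf{x}}\prod_j\tau(P_j(f(\mathbf{x})))^{m_j}$, which I would approximate by $\sum_{\mathbf{x}}\prod_j\widetilde{P}_j(\tau f_1(x_1),\dots,\tau f_n(x_n))^{m_j}$ — with $\widetilde{P}_j$ the Teichm\"uller lift of $P_j$ — bounding the error by a Witt-vector form of Ax's congruence lemma.

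Expanding this polynomial in monomials $\prod_i\tau(f_i(x_i))^{a_i}$ and factoring the $\mathbf{x}$-sum via multiplicativity of $\tau$, the central vanishing input is
\[
\sum_{x\in\F_q}f(x)^a \;=\; 0\quad\text{for }0<a<\omega(f),
\]
an immediate consequence of the multinomial theorem together with~\eqref{THEFACT} (and indeed the very definition of $\omega(f)$). Hence only monomials with $a_i\geq\omega(f_i)$ for every $i\in\I$ contribute, and the bound $\sum_{i\in\I}a_i\leq\sum_jm_j\deg_\I(P_j)\leq(\sum_jm_j)\max_j\deg_\I(P_j)$ forces the total Fourier level $M:=\sum_jm_j$ to satisfy $M\geq\bigl\lceil\sum_{i\in\I}\omega(f_i)/\max_j\deg_\I(P_j)\bigr\rceil$. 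Combining this with Stickelberger's bounds on $\prod_j g(\chi_{m_j}^{-1})$ yields $\mathrm{ord}_p(T(\mathbf{y}))\geq s(r+\mu)$ for every $\mathbf{y}\ne\mathbf{0}$, and the main term $q^{n-r}$ contributes $\mathrm{ord}_p\geq s\mu$ under the hypothesis, giving the claim.

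The hard part will be extracting the Katz-type denominator $\max_j\deg_\I(P_j)$ — rather than the cruder $\sum_j\deg_\I(P_j)$ — from this analysis: this requires showing that in each Fourier sector $(m_1,\dots,m_r)$ one can concentrate the $\I$-degree contribution onto a single polynomial $P_{j^*}$ of maximal $\I$-degree (Katz's refinement over Ax's original argument), and verifying compatibility with the Stickelberger bounds across all admissible sectors. The subtraction of $\sum_j\deg_\I(P_j)$ in the ceiling formula reflects the ``zeroth-level'' congruence already supplied by Theorem~\ref{MAINTHM}, which must be booked separately from the higher-order $p$-adic refinements contributed by Stickelberger.
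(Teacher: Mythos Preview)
Your outline has a genuine gap at its pivotal step. You assert that ``only monomials with $a_i\geq\omega(f_i)$ for every $i\in\I$ contribute'' because $\sum_{x\in\F_q}f(x)^a=0$ for $0<a<\omega(f)$. That identity is correct \emph{in $\F_q$}, but the sum you actually need to control lives in $\mathcal{O}_{K_q}$: it is $\sum_{x}\tau(f_i(x))^{a_i}$ (a sum of Teichm\"uller representatives), and the $\F_q$-vanishing only tells you this is divisible by $p$, not that it equals $0$. For instance, with $f(t)=t^2+t$ over $\F_5$ one has $\omega(f)=2$, yet $\sum_{x\in\F_5}\tau(f(x))=1+2\tau(2)\neq 0$ (it is merely $\equiv 0\pmod 5$). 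So the monomials with small $a_i$ do contribute; they simply contribute with some extra $p$-divisibility that your argument does not track. A related imprecision: Stickelberger bounds $\ord_p\bigl(\prod_j g(\chi_{m_j}^{-1})\bigr)$ in terms of $\sum_j\sigma_p(m_j)/(p-1)$, not in terms of $M=\sum_j m_j$, so even the constraint you derive on $M$ would not feed directly into the valuation estimate you want. Finally, the ``approximation'' $\tau(P_j(f(\mathbf{x})))\approx\widetilde P_j(\tau f_1(x_1),\dots)$ introduces errors of order $p$ that, when raised to the $m_j$th power and summed over all sectors, must themselves be controlled to the full target precision; your appeal to a ``Witt-vector form of Ax's congruence lemma'' does not make this precise.

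The paper's proof avoids all of these issues by abandoning characters and Gauss sums entirely. It lifts the Chevalley polynomial $\prod_j\bigl(1-P_j^{(q-1)q^n}\bigr)$ to $\mathcal{O}_{K_q}$ via Teichm\"uller and works with $\sum_{x\in T_q}\tilde f_i(x)^{\delta}$, for which Lemma~\ref{ZANCAO} gives an \emph{exact} formula as a sum of multinomial coefficients indexed by decompositions $(\gamma_{i1},\dots,\gamma_{iR_i})$ with $\sum_\ell m_{i\ell}\gamma_{i\ell}\in(q-1)\Z^+$. The invariant $\omega(f_i)$ then enters not through power-sum vanishing but through the inequality $\sum_\ell\sigma_q(\gamma_{i\ell}p^{\nu})\geq\omega(f_i)$, obtained by reducing the $\gamma$'s modulo $q-1$. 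The Katz denominator $\max_j\deg_\I(P_j)$ is extracted by a digit-sum inequality (\eqref{SIGMAQ}--\eqref{SIGMAQ1}) exploiting that each $\sum_k\sigma_q(\beta_{jk}p^{\nu})-(q-1)$ is a nonnegative multiple of $q-1$; Legendre's formula $\ord_p(m!)=(m-\sigma_p(m))/(p-1)$ and the identity $\sum_{\nu=0}^{s-1}\sigma_q(\gamma p^{\nu})=\tfrac{q-1}{p-1}\sigma_p(\gamma)$ then convert these combinatorial bounds into the claimed $p$-adic divisibility. None of this machinery appears in your sketch, and it is precisely what replaces the ``hard part'' you flag at the end.
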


Using the fact that  $\sum_{\ell=1}^R m_{\ell}\gamma_{\ell}\in(q-1)\Z^+$ in the definition of $\omega(f)$ implies $\sum_{\ell=1}^R \gamma_{\ell}\ge (q-1)/\deg(f)$, together with \cite[Corollary~2.4]{ZC14}, we conclude
\begin{equation}
\label{AXKATZIMPLIES1}
(q-1)/\deg(f)\le\omega(f)\le u(f). 
\end{equation}
Thus the ``low degree'' hypothesis in Theorem~\ref{AXKATZGENERAL} is in general more stringent than in Theorem~\ref{MAINTHM} -- as must be the case in view of Examples~\ref{EXAMPLE1} and \ref{EXAMPLE2}.   
However, the conditions agree in an important case: by~\cite[Proposition~2.3(d)]{Turnwald95}, we have 
\begin{equation}
\label{AXKATZIMPLIES2}
\deg(f) \mid (q-1) \iff u(f)=\omega(f)=(q-1)/\deg(f).
\end{equation}
Via \eqref{AXKATZIMPLIES1} and \eqref{AXKATZIMPLIES2}, Theorem~\ref{AXKATZGENERAL} implies the following results.

\begin{cor}
\label{QDIVISIBILITY}
Let $P_1,\dots,P_r \in \F_q[t_1,\dots,t_n]$ be nonzero polynomials, and let $\I\subseteq\{1,\dots,n\}$ be  nonempty. Let $f_1,\dots,f_n \in \F_q[t]$ and suppose that $f_i$ is nonconstant with $\deg(f_i)\mid(q-1)$ for all $i\in\I$. Also suppose that
$(q-1) \sum_{j=1}^r \deg_{\I}(P_j) < \sum_{i\in\I} u(f_i)$. Then
\[q\mid \# \{ (x_1,\dots,x_n) \in \F_q^n \mid P_j(f_1(x_1),\dots,f_n(x_n)) = 0 \text{ for all } 1 \leq j \leq r\}. \]
\end{cor}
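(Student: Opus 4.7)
My plan is to derive Corollary~\ref{QDIVISIBILITY} directly from Theorem~\ref{AXKATZGENERAL} by invoking the equivalence \eqref{AXKATZIMPLIES2}, after first disposing of a degenerate case.

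First, I would handle the trivial case in which $\max_{1\le j\le r}\deg_{\I}(P_j)=0$, which is not covered by Theorem~\ref{AXKATZGENERAL}. In this case, every monomial of every $P_j$ has zero $\I$-degree, so each $P_j$ depends only on the variables $t_i$ with $i\notin\I$. Consequently, after substitution, $P_j(f_1(x_1),\dots,f_n(x_n))$ is independent of $x_i$ for $i\in\I$, and the solution set in $\F_q^n$ factors as $\F_q^{|\I|}\times S$ for some $S\subseteq\F_q^{n-|\I|}$. Since $\I$ is nonempty, the count is divisible by $q^{|\I|}\geq q$.

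Second, I would treat the main case $\max_{1\le j\le r}\deg_{\I}(P_j)>0$. By hypothesis, $f_i$ is nonconstant with $\deg(f_i)\mid(q-1)$ for each $i\in\I$, so equivalence \eqref{AXKATZIMPLIES2} yields
\[
u(f_i)=\omega(f_i)=\frac{q-1}{\deg(f_i)}\qquad\text{for all }i\in\I.
\]
Therefore the assumption $(q-1)\sum_{j=1}^r \deg_{\I}(P_j) < \sum_{i\in\I} u(f_i)$ is equivalent to the corresponding inequality with $u(f_i)$ replaced by $\omega(f_i)$, which is exactly the hypothesis required to invoke Theorem~\ref{AXKATZGENERAL}.

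Finally, applying Theorem~\ref{AXKATZGENERAL} gives divisibility by $q$ to the power
\[
\left\lceil\frac{\sum_{i\in\I} \omega(f_i)/(q-1) - \sum_{j=1}^r\deg_{\I}(P_j)}{\max_{1\le j\le r}\deg_{\I}(P_j)}\right\rceil.
\]
The strict inequality $(q-1)\sum_{j=1}^r\deg_{\I}(P_j)<\sum_{i\in\I}\omega(f_i)$ makes the numerator strictly positive, while the denominator is positive by the case assumption. Hence the ceiling is at least $1$, and we conclude that $q$ divides the number of solutions. I expect no serious obstacle here: the argument is essentially just a bookkeeping reduction, the only slight subtlety being the need to peel off the $\max_{1\le j\le r}\deg_{\I}(P_j)=0$ case separately before Theorem~\ref{AXKATZGENERAL} applies.
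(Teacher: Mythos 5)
Your proof is correct and follows the same route the paper intends: deduce the corollary from Theorem~\ref{AXKATZGENERAL} by using \eqref{AXKATZIMPLIES2} to identify $u(f_i)$ with $\omega(f_i)$ when $\deg(f_i)\mid(q-1)$, and then observe that the strict inequality makes the exponent at least $1$. Your separate treatment of the case $\max_{1\le j\le r}\deg_{\I}(P_j)=0$ (not covered by Theorem~\ref{AXKATZGENERAL}'s hypotheses, where the solution set factors as $\F_q^{|\I|}\times S$) is a small but genuine gap in the paper's one-line derivation that you have correctly closed.
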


\begin{cor}
\label{AXKATZGENERALCOR1}
Let $P_1,\dots,P_r \in \F_q[t_1,\dots,t_n]$ be nonzero polynomials, and let $\I\subseteq\{1,\dots,n\}$ be a nonempty subset such that $\max_{1\le j\le r}\deg_{\I}(P_j)>0$. Let $f_1,\dots,f_n \in \F_q[t]$ and assume that $f_i$ is nonconstant for all $i\in\I$.  Suppose that $\sum_{j=1}^r \deg_{\I}(P_j) < \sum_{i\in\I} (1/\deg(f_i))$.
Then  
\[
q^{\left\lceil\left(\sum_{i\in\I} (1/\deg(f_i))-\sum_{j=1}^r \deg_{\I}(P_j)\right)/\max_{1\le j\le r}\deg_{\I}(P_j)\right\rceil}
\] 
divides $\# \{ (x_1,\dots,x_n) \in \F_q^n \mid P_j(f_1(x_1),\dots,f_n(x_n)) = 0 \text{ for all } 1 \leq j \leq r\}$.
\end{cor}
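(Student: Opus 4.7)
The plan is to derive Corollary~\ref{AXKATZGENERALCOR1} as a direct consequence of Theorem~\ref{AXKATZGENERAL}, using the inequality $\omega(f) \geq (q-1)/\deg(f)$ from \eqref{AXKATZIMPLIES1}. The guiding idea is that replacing $\omega(f_i)/(q-1)$ by the smaller quantity $1/\deg(f_i)$ in both the hypothesis and the conclusion of Theorem~\ref{AXKATZGENERAL} yields a valid (though weaker) statement, and this weakened form is exactly Corollary~\ref{AXKATZGENERALCOR1}.

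First I would verify that the hypothesis of Theorem~\ref{AXKATZGENERAL} holds. Since $f_i$ is nonconstant for each $i \in \I$, \eqref{AXKATZIMPLIES1} yields $\omega(f_i) \geq (q-1)/\deg(f_i)$, hence
\[
\sum_{i \in \I} \omega(f_i) \;\geq\; (q-1)\sum_{i \in \I} \frac{1}{\deg(f_i)} \;>\; (q-1)\sum_{j=1}^r \deg_{\I}(P_j),
\]
using the hypothesis of the corollary. Thus Theorem~\ref{AXKATZGENERAL} applies, and we conclude that $q^N$ divides the count, where
\[
N \;=\; \left\lceil \frac{\sum_{i \in \I} (\omega(f_i)/(q-1)) - \sum_{j=1}^r \deg_{\I}(P_j)}{\max_{1 \leq j \leq r} \deg_{\I}(P_j)} \right\rceil.
\]

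Next, since $\omega(f_i)/(q-1) \geq 1/\deg(f_i)$ for each $i \in \I$, and the denominator $\max_j \deg_{\I}(P_j)$ is positive by assumption, monotonicity of the ceiling function in its numerator gives
\[
N \;\geq\; \left\lceil \frac{\sum_{i\in\I} (1/\deg(f_i)) - \sum_{j=1}^r \deg_{\I}(P_j)}{\max_{1\le j\le r}\deg_{\I}(P_j)} \right\rceil,
\]
so divisibility by $q^N$ implies divisibility by $q$ raised to the smaller exponent appearing in the statement of the corollary. This completes the reduction.

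There is no real obstacle: the entire argument is a routine monotonicity comparison, and the authors appear to have built \eqref{AXKATZIMPLIES1} and \eqref{AXKATZIMPLIES2} into the exposition precisely so that the corollary drops out immediately. The only point requiring a moment of care is to ensure $\max_j \deg_{\I}(P_j) > 0$, which is assumed, so that the ceiling expression is well-defined and monotone in the numerator; everything else is a one-line invocation of Theorem~\ref{AXKATZGENERAL}.
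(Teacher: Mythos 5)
Your proof is correct and matches the paper's intended derivation: the paper explicitly states that Corollary~\ref{AXKATZGENERALCOR1} follows from Theorem~\ref{AXKATZGENERAL} via \eqref{AXKATZIMPLIES1}, and you carry out exactly this routine substitution, using $(q-1)/\deg(f_i)\le\omega(f_i)$ to verify the hypothesis, then using monotonicity of the ceiling (with $\max_j\deg_{\I}(P_j)>0$) to weaken the conclusion to the claimed exponent.
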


Let $m \in \Z^+$ and put $d = \gcd(m,q-1)$.  Then, for all $y \in \F_q$ we have 
\[ \# \{x \in \F_q \mid x^m = y\} = \# \{x \in \F_q \mid x^d = y\}, \]
so Corollary~\ref{AXKATZGENERALCOR1} implies the following result.
 
\begin{cor}
\label{AXKATZGENERALCOR}
Let $P_1,\dots,P_r \in \F_q[t_1,\dots,t_n]$ be nonzero polynomials, and let\linebreak $\I\subseteq\{1,\dots,n\}$ be a nonempty subset such that $\max_{1\le j\le r}\deg_{\I}(P_j)>0$. Let $f_1,\dots,f_n \in \F_q[t]$ with $f_i(t)=t^{m_i}$ for $m_i \in \Z^+$ for all $i\in\I$. For $i\in\I$, put $d_i=\gcd(m_i,q-1)$. Suppose that $\sum_{j=1}^r \deg_{\I}(P_j) < \sum_{i\in\I} (1/d_i)$.
Then  
\[
q^{\left\lceil\left(\sum_{i\in\I} (1/d_i)-\sum_{j=1}^r \deg_{\I}(P_j)\right)/\max_{1\le j\le r}\deg_{\I}(P_j)\right\rceil}
\] 
divides $\# \{ (x_1,\dots,x_n) \in \F_q^n \mid P_j(f_1(x_1),\dots,f_n(x_n)) = 0 \text{ for all } 1 \leq j \leq r\}$.
\end{cor}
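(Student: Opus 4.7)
The plan is to reduce directly to Corollary~\ref{AXKATZGENERALCOR1} by replacing each monomial $t^{m_i}$ (for $i \in \I$) by $t^{d_i}$. The engine of the reduction is the fiber-size identity quoted immediately before the statement: for every $y \in \F_q$,
\[
\#\{x \in \F_q \mid x^{m_i} = y\} \;=\; \#\{x \in \F_q \mid x^{d_i} = y\},
\]
because the image of $x \mapsto x^{m_i}$ equals the set of $d_i$-th power residues together with $0$, with each nonzero value hit by exactly $d_i$ preimages (and $0$ hit only by $x = 0$) in both maps.

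Concretely, I would set $\tilde{f}_i(t) \coloneqq t^{d_i}$ for $i \in \I$ and $\tilde{f}_i \coloneqq f_i$ for $i \notin \I$, and let $N$, $\tilde{N}$ denote the solution counts attached to $(f_i)$ and $(\tilde{f}_i)$, respectively. Writing $V \coloneqq \{(y_1,\dots,y_n) \in \F_q^n \mid P_j(y_1,\dots,y_n) = 0 \text{ for all } 1 \le j \le r\}$, partitioning both counts according to the tuple $(f_i(x_i))_{i=1}^n$ gives
\[
N \;=\; \sum_{(y_1,\dots,y_n) \in V}\, \prod_{i=1}^n \#\{x \in \F_q \mid f_i(x) = y_i\}
\]
and the analogous expression for $\tilde{N}$ with $f_i$ replaced by $\tilde{f}_i$. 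Applying the fiber-size identity coordinate by coordinate for the indices $i \in \I$ yields $N = \tilde{N}$.

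Finally, I would invoke Corollary~\ref{AXKATZGENERALCOR1} for the substituted system $(P_j, \tilde{f}_i)$: for each $i \in \I$ the polynomial $\tilde{f}_i = t^{d_i}$ is nonconstant of degree $d_i \ge 1$, the hypothesis $\sum_{j=1}^r \deg_{\I}(P_j) < \sum_{i \in \I} (1/\deg(\tilde{f}_i))$ is exactly the assumed inequality $\sum_{j=1}^r \deg_{\I}(P_j) < \sum_{i \in \I} (1/d_i)$, and the condition $\max_{1 \le j \le r} \deg_{\I}(P_j) > 0$ is inherited unchanged. Corollary~\ref{AXKATZGENERALCOR1} then delivers the desired power-of-$q$ divisibility for $\tilde{N}$, and the identity $N = \tilde{N}$ transfers it to $N$. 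There is no serious obstacle: the hard analytic work has already been packaged into Theorem~\ref{AXKATZGENERAL} and Corollary~\ref{AXKATZGENERALCOR1}, and the present proof amounts to the transparent fiber-by-fiber substitution step.
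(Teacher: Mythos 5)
Your proposal is correct and matches the paper's own (very terse) argument exactly: the paper deduces Corollary~\ref{AXKATZGENERALCOR} from Corollary~\ref{AXKATZGENERALCOR1} by invoking precisely the fiber-size identity $\#\{x \in \F_q \mid x^m = y\} = \#\{x \in \F_q \mid x^d = y\}$ for $d = \gcd(m, q-1)$, which lets one replace $t^{m_i}$ by $t^{d_i}$ for $i \in \I$ without changing the solution count. Your write-up merely spells out the fiber decomposition that the paper leaves implicit; there is no gap and no deviation.
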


Taking $\I=\{1,\dots,n\}$ and $m_1=\dots=m_n=1$ in Corollary~\ref{AXKATZGENERALCOR}, we recover Ax-Katz's Theorem~\ref{AXKATZTHM}. 
Taking $\I=\{1,\dots,n\}$, $r = 1$ and $\deg(P_1) = 1$, we recover Wan's Theorem~\ref{WAN88THM}. 
Taking $f_1(t)=\dots=f_n(t)=t$, we recover a result of Cao~\cite[Corollary~12]{Cao12}.

\subsection{$\bm{p}$-weight Degrees} 
We will give a further generalization of Theorem~\ref{AXKATZTHM} inspired by the work of Moreno-Moreno \cite{MM93}, \cite{MM95}.  For positive integers $M$ and $N\ge 2$, let $\sigma_N(M)$ be the sum of digits in the base $N$ representation of $M$. Let $\I$ be a nonempty subset of $\{1,\dots,n\}$. For a monomial $at_1^{m_1}\cdots t_n^{m_n}$ with $a\in\F_q^{\times}$, define 
$w_{p,\I}(at_1^{m_1}\cdots t_n^{m_n})\coloneqq \sum_{i\in\I}\sigma_p(m_i)$.  
For a polynomial $P=Q_1+\dots+Q_{\ell}$, where  $Q_1,\dots,Q_{\ell}\in\F_q[t_1,\dots,t_n]$ are monomials, we define the \textbf{$\bm{p}$-weight degree with respect to $\I$} as $w_{p,\I}(P)\coloneqq \max_{1\le k\le\ell}w_{p,\I}(Q_k)$. In the case of univariate polynomials we shall suppress the subscript $\I$.

\begin{theorem}
\label{MORENOGENERAL}
Let $P_1,\dots,P_r \in \F_q[t_1,\dots,t_n]$ be nonzero polynomials, and let\linebreak $\I\subseteq\{1,\dots,n\}$ be a nonempty subset such that $\max_{1\le j\le r}w_{p,\I}(P_j)>0$. Let $f_1,\dots,f_n \in \F_q[t]$ and assume that $f_i$ is nonconstant for all $i\in\I$.  Suppose that $\sum_{j=1}^r w_{p,\I}(P_j) \le \sum_{i\in\I}(1/w_p(f_i))$.
Then  
\[
p^{\left\lceil s\left(\sum_{i\in\I}(1/w_p(f_i))-\sum_{j=1}^r w_{p,\I}(P_j)\right)/\max_{1\le j\le r} w_{p,\I}(P_j)\right\rceil}
\] 
divides $\# \{ (x_1,\dots,x_n) \in \F_q^n \mid P_j(f_1(x_1),\dots,f_n(x_n)) = 0 \text{ for all } 1 \leq j \leq r\}$.
\end{theorem}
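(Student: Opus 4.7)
The plan is to extend the proof of Theorem~\ref{AXKATZGENERAL} to the $p$-weight setting, paralleling the passage from the classical Ax-Katz theorem to its Moreno-Moreno refinement. The invariants $\omega(f_i)/(q-1)$ and $\deg_{\I}(P_j)$ get replaced by their $p$-weight analogs $1/w_p(f_i)$ and $w_{p,\I}(P_j)$, and the analytic input shifts from the elementary vanishing criterion for $\sum_{x\in\F_q}f(x)^m$ to Stickelberger's formula for the $p$-adic valuation of Gauss sums, which is the natural source of the digit sum $\sigma_p$ underlying $w_p$.

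Concretely, I will expand
\[
N = q^{-r}\sum_{\mathbf{c}\in\F_q^r}\sum_{\mathbf{x}\in\F_q^n}\psi\!\left(\sum_{j=1}^r c_j P_j(f_1(x_1),\ldots,f_n(x_n))\right)
\]
for a fixed nontrivial additive character $\psi$ of $\F_q$. The term $\mathbf{c}=\mathbf{0}$ contributes $q^n$, and for $\mathbf{c}\ne\mathbf{0}$, expanding $\sum_j c_j P_j$ as a sum of monomials $\prod_i y_i^{m_i}$ factors the inner sum across the variables $x_i$ into products of one-variable sums $\sum_{x\in\F_q}\psi(c\cdot f_i(x)^{m_i})$. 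The latter, via the multinomial expansion of $f_i(x)^{m_i}$ together with the standard decomposition of $\psi$-sums into Gauss sums, becomes a linear combination of Gauss sums $g(\chi_0^a)$ whose indices $a$ satisfy $\sigma_p(a)\le\sigma_p(m_i)\cdot w_p(f_i)$; Stickelberger's formula $v_p(g(\chi_0^a))=\sigma_p(a)/(p-1)$, combined with this bound, provides a lower estimate on the per-variable valuation contribution in terms of $\sigma_p(m_i)/w_p(f_i)$.

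Combining these per-variable estimates over $i\in\I$, invoking subadditivity of $\sigma_p$ on the monomial exponents of $\sum_j c_j P_j^{q-1}$ (which is what lets $w_{p,\I}(P_j)$ appear in place of $\deg_{\I}(P_j)$), and absorbing the $q^{-r}$ prefactor under the hypothesis $\sum_j w_{p,\I}(P_j)\le\sum_{i\in\I}1/w_p(f_i)$ yields the claimed bound $v_p(N)\ge\lceil s(\sum_{i\in\I}1/w_p(f_i)-\sum_j w_{p,\I}(P_j))/\max_j w_{p,\I}(P_j)\rceil$. The principal obstacle is the per-variable Stickelberger estimate for general $f\in\F_q[t]$: one must verify that when $f(x)^m$ is expanded multinomially and its contribution to $\sum_{x\in\F_q}\psi(c\cdot f(x)^m)$ is decomposed into Gauss sums, the best achievable valuation is governed by $\sigma_p(m)/w_p(f)$ rather than by the cruder $m/\deg(f)$. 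This is the $p$-weight refinement of the bounds $u(f)\le\#f(\F_q)-1$ of Wan-Shiue-Chen and $\omega(f)\le u(f)$ of Zhu-Cao that played the analogous role in the proofs of Theorems~\ref{MAINTHM} and~\ref{AXKATZGENERAL}.
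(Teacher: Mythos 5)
Your proposal takes a genuinely different route from the paper. The paper's proof of Theorem~\ref{MORENOGENERAL} is a short addendum to its proof of Theorem~\ref{AXKATZGENERAL}: it is a Chevalley--Warning-style argument in the spirit of Wan, working over the Teichm\"uller representatives $T_q\subset\mathcal O_{K_q}$, replacing $1-P^{q-1}$ by $1-P^{(q-1)q^n}$ modulo $q^n$ (Lemma~\ref{NEWLEMMA2}), expanding multinomially, computing $\sum_{x\in T_q}\tilde f(x)^\delta$ explicitly via Lemma~\ref{ZANCAO}, and controlling $p$-adic valuations through Legendre's formula and the digit-sum inequalities of Lemma~\ref{MORENOSHUM}. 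No additive characters or Gauss sums appear. You instead propose the classical Ax/Moreno--Moreno route through $N=q^{-r}\sum_{\mathbf c}\sum_{\mathbf x}\psi(\sum_j c_jP_j(\ldots))$, Gauss sums, and Stickelberger's theorem.

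The difficulty is that your sketch has a genuine gap at precisely the step you flag. For $f(x)=x^m$ the decomposition $\sum_{x\in\F_q}\psi(cx^m)=\sum_{\chi^m=\epsilon,\,\chi\ne\epsilon}\bar\chi(c)\,g(\chi)$ is standard and Stickelberger directly bounds the valuation of each $g(\chi)$; this is how the diagonal results of Moreno--Castro are proved. But for a general nonconstant $f\in\F_q[t]$ there is no clean decomposition of $\sum_{x\in\F_q}\psi\bigl(c\,f(x)^m\bigr)$ into a sum of Gauss sums whose indices are controlled by $m$ and the exponents of $f$ in the form you assert. Producing such a decomposition together with the accompanying digit-sum inequality is the heart of the matter, not a verification to be deferred. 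Moreover, the bound you state goes the wrong way: with Stickelberger's $v_p(g(\chi_0^a))=\sigma_p(a)/(p-1)$, to obtain a \emph{lower} bound on the valuation of a $\Z$-linear combination of Gauss sums one must show that every contributing index $a$ has $\sigma_p(a)$ at least some target value, whereas your inequality $\sigma_p(a)\le\sigma_p(m_i)w_p(f_i)$ only caps the valuation of individual terms and yields nothing about divisibility. The paper side-steps both problems by never forming Gauss sums: the bound that does the work is the chain \eqref{qminus1}--\eqref{spminus1}, which is elementary digit-sum combinatorics applied to the constraint $\sum_\ell m_{i\ell}\gamma_{i\ell}\in(q-1)\Z^+$ from Lemma~\ref{ZANCAO}, not a character-sum estimate.
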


\begin{cor}
\label{MORENOGENERALCOR}
Let $P_1,\dots,P_r \in \F_q[t_1,\dots,t_n]$ be nonzero polynomials, and let\linebreak $\I\subseteq\{1,\dots,n\}$ be a nonempty subset such that $\max_{1\le j\le r}w_{p,\I}(P_j)>0$. Let $f_1,\dots,f_n \in \F_q[t]$ with $f_i(t)=t^{m_i}$ for $m_i \in \Z^+$ for all $i\in\I$. For $i\in\I$, put $d_i=\gcd(m_i,q-1)$. Suppose that 
$\sum_{j=1}^r w_{p,\I}(P_j) < \sum_{i\in\I}(1/\sigma_p(d_i))$. Then  
\[
p^{\left\lceil s\left(\sum_{i\in\I}(1/\sigma_p(d_i))-\sum_{j=1}^r w_{p,\I}(P_j)\right)/\max_{1\le j\le r} w_{p,\I}(P_j)\right\rceil}
\] 
divides
$\# \{ (x_1,\dots,x_n) \in \F_q^n \mid P_j(f_1(x_1),\dots,f_n(x_n)) = 0 \text{ for all } 1 \leq j \leq r\}$.
\end{cor}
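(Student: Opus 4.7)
The plan is to deduce Corollary~\ref{MORENOGENERALCOR} from Theorem~\ref{MORENOGENERAL} by the same reduction that passes from Corollary~\ref{AXKATZGENERALCOR1} to Corollary~\ref{AXKATZGENERALCOR}: for each $i\in\I$ I would replace $f_i(t)=t^{m_i}$ by $\tilde f_i(t)=t^{d_i}$, and leave $f_i$ unchanged for $i\notin\I$.

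First, I would verify that this substitution preserves the solution count. For any $y\in\F_q$ and any $m\in\Z^+$ with $d=\gcd(m,q-1)$, the fibers of $x\mapsto x^m$ and $x\mapsto x^d$ over $y$ have equal cardinality: both preimages of $0$ equal $\{0\}$, and over $y\in\F_q^{\times}$ each fiber is either empty or has size $d$, because the kernel of the $m$-th power map on $\F_q^{\times}$ has order $\gcd(m,q-1)=d$ and the kernel of the $d$-th power map has order $\gcd(d,q-1)=d$. Writing the count as
\[
\sum_{(y_1,\dots,y_n)\in\F_q^n}\Bigl(\prod_{j=1}^r \mathbf{1}[P_j(y_1,\dots,y_n)=0]\Bigr)\prod_{i=1}^n \#\{x\in\F_q\mid f_i(x)=y_i\},
\]
I see that the inner product over $i$, and hence the entire sum, is unchanged when $f_i$ is replaced by $\tilde f_i$ for $i\in\I$.

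Next, I would invoke Theorem~\ref{MORENOGENERAL} with the $\tilde f_i$'s in place of the $f_i$'s. For the monomial $\tilde f_i(t)=t^{d_i}$, one has $w_p(\tilde f_i)=\sigma_p(d_i)$, so the strict inequality $\sum_{j=1}^r w_{p,\I}(P_j) < \sum_{i\in\I} 1/\sigma_p(d_i)$ assumed in the corollary implies the non-strict hypothesis $\sum_{j=1}^r w_{p,\I}(P_j) \le \sum_{i\in\I} 1/w_p(\tilde f_i)$ required by the theorem. Each $\tilde f_i$ is nonconstant since $d_i\ge 1$, and the condition $\max_{1\le j\le r} w_{p,\I}(P_j)>0$ is carried over verbatim. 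Substituting $w_p(\tilde f_i)=\sigma_p(d_i)$ into the theorem's divisibility exponent
\[
\left\lceil s\Bigl(\sum_{i\in\I} 1/w_p(\tilde f_i) - \sum_{j=1}^r w_{p,\I}(P_j)\Bigr)\Big/\max_{1\le j\le r}w_{p,\I}(P_j)\right\rceil
\]
yields exactly the exponent claimed in the corollary.

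There is no substantive obstacle once Theorem~\ref{MORENOGENERAL} is available; the proof is a bookkeeping reduction. The two small points that require care are (i) noting that the fiber-cardinality equality is applied pointwise in the $y_i$ while the coordinates outside $\I$ remain fixed, and (ii) identifying $w_p(t^{d_i})$ with $\sigma_p(d_i)$ so that the theorem's hypothesis and exponent translate correctly into their monomial-specific forms.
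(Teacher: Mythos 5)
Your reduction is correct and is exactly what the paper intends: it is the same fiber-count argument the paper makes explicit just before Corollary~\ref{AXKATZGENERALCOR} (replacing $t^{m_i}$ by $t^{d_i}$), applied here to deduce Corollary~\ref{MORENOGENERALCOR} from Theorem~\ref{MORENOGENERAL}, with $w_p(t^{d_i})=\sigma_p(d_i)$ and the strict hypothesis implying the theorem's non-strict one.
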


Taking  $\I=\{1,\dots,n\}$ and $m_1=\dots=m_n=1$ in Corollary~\ref{MORENOGENERALCOR} recovers a result of Moreno and Moreno~\cite[Theorem~1]{MM93}, \cite[Theorem~1]{MM95}. Taking\linebreak $\I=\{1,\dots,n\}$, $r=1$ and $\deg(P_1)=1$ recovers a result of Moreno and Castro~\cite[Theorem~10]{MC08}. Taking $f_1(t)=\dots=f_n(t)=t$ recovers a result of Castro and Castro-Velez~\cite[Theorem~7]{CCV12}.

\subsection{Outline of the paper} The proof of Theorem~\ref{MAINTHM} is inspired by Ax's proof of the Chevalley-Warning Theorem~\cite{Ax64}.  We discovered Corollary~\ref{CORMAINTHMNEW} with\linebreak $\I=\{1,\dots,n\}$ first and proved it using a method inspired by Chevalley's proof of his theorem \cite{Chevalley35} and some polynomial method arguments.  The latter proof is an easy consequence of a new characterization of WSC polynomials (Theorem~\ref{BIGWSCTHM}) that has independent interest, so we will give both proofs here. The proof of Theorem~\ref{AXKATZGENERAL} is based on ideas and results from~\cite{Wan95} and \cite{ZC14}. 

We prove Theorem~\ref{BIGWSCTHM} in \S2. The proofs of Corollary~\ref{CORMAINTHMNEW},  Theorem~\ref{MAINTHM}, Theorem~\ref{AXKATZGENERAL} and Theorem~\ref{MORENOGENERAL} are presented in \S3, \S4, \S5 and \S6, respectively.

\section{A Characterization of WSC polynomials}

For a polynomial $f \in \F_q[t]$, let $f(\F_q)$ be its \textbf{value set}. For $y \in f(\F_q)$, let $e(y) \coloneqq \#\{x \in \F_q \mid f(x) = y\}$. Let $\varphi(t) = \prod_{y \in f(\F_q)} (t-y).$

\begin{theorem}
\label{BIGWSCTHM}
For a polynomial $f \in \F_q[t]$, the following are equivalent:
\begin{itemize}
\item[\textup{(a)}] The polynomial $f$ is WSC. 
\item[\textup{(b)}] The polynomial $f$ is weakly WSC, and for all $y \in f(\F_q)$, we have 
\begin{equation}
\label{BAOUEQ}
e(y) \varphi'(y) = C(f) \coloneqq \sum_{x \in \F_q} f(x)^{u(f)} \in \F_q^{\times}.
\end{equation} 
\item[\textup{(c)}] There is $C \in \F_q^{\times}$ such that $e(y) \varphi'(y) = C$ for all $y \in f(\F_q)$.
\end{itemize}
\end{theorem}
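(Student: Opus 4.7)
The plan starts from associating to $f$ the polynomial
\[\psi(t) \coloneqq \sum_{y \in f(\F_q)} e(y) \prod_{\substack{z \in f(\F_q)\\ z \neq y}} (t - z),\]
of degree at most $N - 1$, where $N \coloneqq \# f(\F_q)$. Evaluating at any $y_0 \in f(\F_q)$ kills every summand except the one indexed by $y = y_0$, giving $\psi(y_0) = e(y_0)\varphi'(y_0)$. The implication (b) $\Rightarrow$ (c) is immediate. For the converse direction, note that if $\psi(y) = C$ for all $y \in f(\F_q)$, then $\psi(t) - C$ is a polynomial of degree at most $N - 1$ with $N$ distinct zeros, hence identically zero; so condition (c) is equivalent to $\psi$ being a nonzero constant polynomial.

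Next, I expand $\psi$ in terms of the elementary symmetric polynomials $\sigma_k$ of the set $f(\F_q)$. Combining $\prod_{z \neq y}(t-z) = \sum_{k=0}^{N-1}(-1)^k \sigma_k(f(\F_q) \setminus \{y\})\, t^{N-1-k}$ with the standard recursion $\sigma_k(f(\F_q) \setminus \{y\}) = \sum_{j=0}^{k}(-y)^j \sigma_{k-j}$, a routine rearrangement yields
\[ [t^{N-1-k}]\,\psi(t) \;=\; \sum_{j=0}^{k}(-1)^{k+j}\sigma_{k-j}\, S_j, \qquad\text{where}\qquad S_j \coloneqq \sum_{x \in \F_q} f(x)^j. \]
Since $S_0 = q = 0$ in $\F_q$, an easy induction on $k$ shows that $[t^{N-1-k}]\psi = 0$ for all $k = 0, 1, \dots, K$ iff $S_1 = \dots = S_K = 0$, and that in that case $[t^{N-2-K}]\psi = S_{K+1}$.

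Putting these pieces together, $\psi$ is a nonzero constant precisely when $S_1 = \dots = S_{N-2} = 0$ and $S_{N-1} \neq 0$, which is exactly the assertion $u(f) = N - 1$ defining the WSC property. This establishes (a) $\Leftrightarrow$ (c); and in the WSC case the constant value of $\psi$ is $S_{N-1} = C(f)$, so $e(y)\varphi'(y) = C(f)$ for every $y \in f(\F_q)$, giving (a) $\Rightarrow$ (b) (being weakly WSC is then automatic since $u(f) = N-1 < \infty$). The main risk of error I foresee is sign and index bookkeeping in the symmetric-function expansion; I would sanity-check the extreme cases $k = 0$ (which must give $S_0 = 0$) and $k = 1$ (which must give $S_1$) by hand before trusting the general formula.
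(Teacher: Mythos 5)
Your proof is correct, and it takes a genuinely different (and arguably more self-contained) route from the paper's. The paper proves (a) $\Rightarrow$ (b) by setting up the moment equations $\sum_j y_j^{\delta}e(y_j) = 0$ (for $0 \le \delta \le k-2$) and $\sum_j y_j^{k-1}e(y_j) = C(f)$ as a square linear system in the unknowns $e(y_j)$, and solving by Cramer's rule and Vandermonde determinants; it proves (c) $\Rightarrow$ (a) by invoking a result of Turnwald (\cite[Prop.~2.8]{Turnwald95}) stating $q - u(f) - 1 = \deg g$ with $g = (\prod_{x\in\F_q}(t-f(x)))'$, then factoring $g$ and showing the factor $h$ (your $\psi$) is constant. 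You instead work entirely inside $\psi$ itself: expanding its coefficients in terms of the power sums $S_j=\sum_x f(x)^j$ and elementary symmetric functions, and showing by a short induction that $\psi$ is a nonzero constant exactly when $S_1=\cdots=S_{N-2}=0$ and $S_{N-1}\neq 0$, i.e.\ $u(f)=N-1$. This yields (a) $\Leftrightarrow$ (c) and, as a byproduct, identifies the constant as $C(f)$, giving (a) $\Rightarrow$ (b). The payoff is that your argument avoids both Cramer's rule and the external dependency on Turnwald's proposition, and it simultaneously recovers (without assuming) the Wan--Shiue--Chen bound $u(f) \le \#f(\F_q)-1$ for weakly WSC $f$. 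A minor presentational point: it is worth recording explicitly that when $f$ is constant ($N=1$), $\psi = q = 0$, so (c) fails, consistent with $u(f)=\infty$ and $f$ not being WSC; your index bookkeeping handles this but only implicitly.
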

\begin{proof}
(a) $\implies$ (b): Being WSC, $f$ is weakly WSC.  The nonempty fibers of $E(f)$ partition $\F_q$, so
$\sum_{y \in f(\F_q)} e(y) = q = 0 \in \F_q$. Put $k = u(f) + 1$.  By definition of $u(f)$ and $C(f)$, we have
\begin{align*}
&\sum_{x \in \F_q} f(x)^{\delta} = 0, \text{ for all } 0 \leq \delta \leq k-2,\\
&\sum_{x \in \F_q} f(x)^{k-1} = C(f).
\end{align*}
Since $f$ is a WSC polynomial, we have $\# f(\F_q)= k$, so we may write $f(\F_q) = \{y_1,\dots,y_k\}$.  Then the 
above relations are equivalent to the linear system 
\begin{align*}
&\sum_{j=1}^{k} y_j^{\delta} e(y_j)= 0\text{ for all } 0 \leq \delta \leq k-2, \\
&\sum_{j=1}^{k} y_j^{k-1} e(y_j) = C(f).
\end{align*}
The matrix of coefficients has Vandermonde determinant
\[ \begin{vmatrix}
1 & 1 & \ldots & 1\\
y_1 & y_2 & \ldots & y_k\\
y_1^2 & y_2^2 & \ldots & y_k^2\\
\hdotsfor{4}\\
y_1^{k-1} & y_2^{k-1} & \ldots & y_k^{k-1}
\end{vmatrix}= \prod_{1 \leq i_1 < i_2 \leq k} (y_{i_2}-y_{i_1}) \neq 0.\]
Applying Cramer's rule, for all $1 \leq j \leq k$, we have
\begin{equation}
\label{eq1}
e(y_j)=\frac{(-1)^{j+k}C(f)\prod_{\substack{1\le i_1<i_2\le k\\ i_1\ne j, i_2\ne j}} (y_{i_2}-y_{i_1})}{\prod_{1\le i_1<i_2\le k} (y_{i_2}-y_{i_1})}.\qquad
\end{equation}
Since $\varphi'(y_j)=\prod_{\substack{i=1\\ i\ne j}}^{k}(y_j-y_i)$, we have
\begin{align*}
\prod_{1\le i_1<i_2\le k} (y_{i_2}-y_{i_1})&=\biggl(\prod_{\substack{1\le i_1<i_2\le k\\ i_1\ne j, i_2\ne j}} (y_{i_2}-y_{i_1})\biggr)\biggl(\prod_{i=j+1}^k (y_i-y_j)\biggr)\biggl(\prod_{i=1}^{j-1} (y_j-y_i)\biggr)\\
&=(-1)^{k-j}\varphi'(y_j)\prod_{\substack{1\le i_1<i_2\le k\\ i_1\ne j, i_2\ne j}} (y_{i_2}-y_{i_1}).
\end{align*}
Substituting this value into \eqref{eq1}, we obtain $e(y_j)=\frac{C(f)}{\varphi'(y_j)}$ and thus \eqref{BAOUEQ}. 

(b) $\implies$ (c) is immediate. 

(c) $\implies$ (a): By a result of Turnwald~\cite[Proposition~2.8]{Turnwald95} we have 
\begin{equation}
\label{TURNEQ1}q - u(f) - 1 = \deg(g),
\end{equation}   
where
\begin{align*}
g(t) &\coloneqq \biggl( \prod_{x \in \F_q} (t-f(x)) \biggr)' = \biggl( \prod_{y \in f(\F_q)}(t-y)^{e(y)} \biggr)'\\
&=\biggl(\prod_{y \in f(\F_q)}(t-y)^{e(y)-1}\biggr) \cdot \biggl( \sum_{y \in f(\F_q)} e(y) \prod_{z \in f(\F_q) \setminus \{y\}} (t-z) \biggr).
\end{align*}
The polynomial $h(t) \coloneqq \sum_{y \in f(\F_q)} e(y) \prod_{z \in f(\F_q) \setminus \{y\}} (t-z)$ has degree at most\linebreak $\#f(\F_q)-1$; 
moreover, for all $y \in f(\F_q)$, we have $h(y) = C$.  Thus $E(h)$ is constant and nonzero on a set of size larger than $\deg h$, so $\deg h = 0$ and \begin{equation}
\label{eq2}\deg(g) = \sum_{y \in f(\F_q)} (e(y)-1) = q - \# f(\F_q).
\end{equation}
Together \eqref{TURNEQ1} and \eqref{eq2} give $u(f) = \# f(\F_q) -1$, that is, $f$ is a WSC polynomial.
\end{proof}

\begin{remark}
As in Example~\ref{MONOEX}, let $f(t) = t^m$ be a monomial, and let\linebreak $d = \gcd(m,{q-1})$.  Then $e(0) = 1$ and for all $m$th powers $x \in \F_q^{\times}$, we have $e(x) = d$.  So Theorem~\ref{BIGWSCTHM} gives 
$\varphi'(0) = -1$ and $d\cdot \varphi'(x) = -1$ for all $m$th powers $x \in \F_q^{\times}$. 
In this case $\varphi(t) = t^{1+\frac{q-1}{d}}-t$, and differentiating and evaluating at 
$x$ also gives the result.  
\end{remark}

\section{Proof of Corollary~\ref{CORMAINTHMNEW}, following Chevalley}

\begin{lemma}
\label{AUXPROP}
Let $P_1,\dots,P_r \in \F_q[t_1,\dots,t_n]$ be nonzero polynomials. For $1 \leq i \leq n$, let $Y_i\subseteq \F_q$ be nonempty subsets,  and put $\varphi_i(t) \coloneqq \prod_{y \in Y_i}(t-y)$. Put $Y \coloneqq \prod_{i=1}^n Y_i$ and 
$V_Y \coloneqq \{ (y_1,\dots,y_n) \in Y \mid P_j(y_1,\dots,y_n) = 0 \text{ for all } 1 \leq j \leq r\}$.  
Suppose that $(q-1) \sum_{j=1}^r \deg(P_j) < \sum_{i=1}^n (\# Y_i -1)$. 
Then we have 
\[
\sum_{(y_1,\dots,y_n) \in V_Y} \frac{1}{\prod_{i=1}^n \varphi_i'(y_i)} = 0 \in \F_q.
\]  
\end{lemma}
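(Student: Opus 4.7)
The plan is to adapt Chevalley's classical indicator polynomial trick. I would introduce
\[
P(t_1,\dots,t_n) \coloneqq \prod_{j=1}^r \bigl(1 - P_j(t_1,\dots,t_n)^{q-1}\bigr),
\]
which, since $x^{q-1}=1$ for $x\in \F_q^\times$ and $0^{q-1}=0$, satisfies $P(y_1,\dots,y_n)=1$ when $(y_1,\dots,y_n)\in V_Y$ and $P(y_1,\dots,y_n)=0$ otherwise. So
\[
\sum_{(y_1,\dots,y_n)\in V_Y}\frac{1}{\prod_{i=1}^n \varphi_i'(y_i)} = \sum_{(y_1,\dots,y_n)\in Y}\frac{P(y_1,\dots,y_n)}{\prod_{i=1}^n \varphi_i'(y_i)}.
\]
The task becomes showing this right-hand side vanishes.

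Next, I would expand $P$ as an $\F_q$-linear combination of monomials and exploit the product structure of $Y=\prod_i Y_i$ to break the sum up. Introducing the $\F_q$-linear functional $L_i\colon \F_q[t]\to \F_q$ defined by $L_i(g)\coloneqq\sum_{y\in Y_i}\frac{g(y)}{\varphi_i'(y)}$, a monomial $c\,t_1^{m_1}\cdots t_n^{m_n}$ contributes $c\prod_{i=1}^n L_i(t^{m_i})$ to the sum. So it suffices to show each such product is zero for every monomial occurring in $P$.

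The key lemma I would establish is that $L_i(g)=0$ whenever $\deg(g)<\#Y_i-1$. This is a one-line consequence of Lagrange interpolation: for any $g$ with $\deg(g)\leq \#Y_i-1$, one has the identity
\[
g(t) = \sum_{y\in Y_i}\frac{g(y)}{\varphi_i'(y)}\prod_{z\in Y_i\setminus\{y\}}(t-z),
\]
whose coefficient of $t^{\#Y_i-1}$ is exactly $L_i(g)$. When $\deg(g)<\#Y_i-1$, that coefficient on the left is $0$, so $L_i(g)=0$.

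To conclude, I would use the degree bound $\deg(P)\leq (q-1)\sum_{j=1}^r \deg(P_j)$, which together with the hypothesis gives $\deg(P)<\sum_{i=1}^n(\#Y_i-1)$. Hence any monomial $t_1^{m_1}\cdots t_n^{m_n}$ of $P$ has $\sum_i m_i<\sum_i(\#Y_i-1)$, forcing some index $i$ with $m_i<\#Y_i-1$; for that index $L_i(t^{m_i})=0$, killing the whole product. Summing over the monomials of $P$ gives the claim. I do not anticipate a genuine obstacle: the only point requiring a little care is the clean identification of $L_i(g)$ as a leading coefficient via Lagrange interpolation, and the verification that $\deg(P)$ obeys the required bound.
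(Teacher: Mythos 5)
Your argument is correct. The paper does not prove this lemma itself but cites Clark's \emph{Combinatorial Nullstellens\"atze Revisited} for it; your proof reconstructs exactly the argument one would expect there: the Chevalley indicator $\prod_{j=1}^r\bigl(1-P_j^{q-1}\bigr)$ converts the restricted sum over $V_Y$ to a polynomial sum over the grid $Y$, the product structure $Y=\prod_i Y_i$ factors each monomial contribution into $c\prod_{i=1}^n L_i(t^{m_i})$, Lagrange interpolation gives $L_i(g)=0$ whenever $\deg g<\#Y_i-1$, and the hypothesis $\deg P\le(q-1)\sum_j\deg P_j<\sum_i(\#Y_i-1)$ forces by pigeonhole at least one such index $i$ for every monomial of $P$. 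The only implicit point worth making fully explicit is that $\varphi_i'(y)\ne 0$ for $y\in Y_i$, since $\varphi_i'(y)=\prod_{z\in Y_i\setminus\{y\}}(y-z)$ and $\varphi_i$ has distinct roots; this guarantees that $L_i$ is well-defined and is the identity underlying the interpolation formula you invoke.
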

\begin{proof}
This is \cite[Theorem~19(a)]{Clark14}.
\end{proof}

Let $P_1,\dots,P_r \in \F_q[t_1,\dots,t_n]$ be nonzero polynomials,  and let $\I\subseteq\{1,\dots,n\}$ be a nonempty subset. We assume without loss of generality that $\I=\{1,\dots,N\}$. Let $f_1,\dots,f_N \in \F_q[t]$ be WSC such that \eqref{MAINTHMEQ} holds, and let $f_{N+1},\ldots,$ $f_n\in\F_q[t]$.  Let
\[
X \coloneqq \{ (x_1,\dots,x_n) \in \F_q^n \mid P_j(f_1(x_1),\dots,f_n(x_n)) = 0 \text{ for all } 1 \leq j \leq r \}. 
\] 
For $a_{N+1},\dots,a_n\in\F_q$, let $X_{a_{N+1},\dots,a_n}$ denote the set
\begin{align*}
\{ (x_1,\dots,x_N) \in \F_q^N \mid\, & P_1(f_1(x_1),\dots,f_N(x_N),f_{N+1}(a_{N+1}),\dots,f_n(a_n))=0\\ &\text{for all } 1 \leq j \leq r\}. 
\end{align*}
Since $\# X=\sum_{(a_{N+1},\dots,a_n)\in\F_q^{n-N}} \# X_{a_{N+1},\dots,a_n}$, 
it suffices to show that $p$ divides $\# X_{a_{N+1},\dots,a_n}$ for any $(a_{N+1},\dots,a_n)\in\F_q^{n-N}$. Fix $(a_{N+1},\dots,a_n)\in\F_q^{n-N}$ and put 
\[
Q_j(t_1,\dots,t_N)\coloneqq P_j(t_1,\dots,t_N, f_{N+1}(a_{N+1}),\dots,f_n(a_n))\in\F_q[t_1,\dots,t_N]
\]
for all $1\le j\le r$. Then 
\[
X_{a_{N+1},\dots,a_n}\!=\{(x_1,\dots,x_N)\in\F_q^N\!\mid Q_j(f_1(x_1),\dots,f_N(x_N))=0 \text{ for all } 1\le j\le r\}.
\]
If $Q_1,\dots,Q_r$ are all identically zero, then $X_{a_{N+1},\dots,a_n}=\F_q^N$, which implies\linebreak $p\mid \# X_{a_{N+1},\dots,a_n}$. Now suppose that not all of $Q_1,\dots,Q_r$ are  identically zero. Without loss of generality we may assume that $Q_1,\dots,Q_M$ are nonzero polynomials and $Q_{M+1},\dots,Q_r$ are all identically zero. For all $1 \leq i \leq N$, let $Y_i \coloneqq f_i(\F_q)$. Let $Y\!\coloneqq \prod_{i=1}^N Y_i$ and $V_Y \coloneqq \{ (y_1,\dots,y_N) \in Y \mid Q_j(y_1,\dots,y_N) = 0 \text{ for all } 1 \leq j \leq M \}$. For $1 \leq i \leq N$ and $y_i \in Y_i$, let $\varphi_i(t) \coloneqq \prod_{y \in Y_i}(t-y)$ and $e_i(y_i) \coloneqq \# \{{x_i \in \F_q} \mid f_i(x_i) = y_i \}$. 
Each $(y_1,\dots,y_N) \in V_Y$ corresponds to $e_1(y_1) \cdots e_N(y_N)$ elements $(x_1,\dots,x_N) \in X_{a_{N+1},\dots,a_n}$ 
with $(f_1(x_1),\dots,f_N(x_N)) = (y_1,\dots,y_N)$. Since 
\[
(q-1)\sum_{j=1}^M \deg(Q_j)\le (q-1)\sum_{j=1}^r \deg_{\I}(P_j)< \sum_{i=1}^N (\# f_i(\F_q)-1),
\] 
Lemma~\ref{AUXPROP} and Theorem~\ref{BIGWSCTHM} give
\begin{align*} 
0 &= \sum_{(y_1,\dots,y_N) \in V_Y} \frac{1}{\prod_{i=1}^N \varphi_i'(y_i)}\\ 
&= \sum_{(x_1,\dots,x_N)\in X_{a_{N+1},\dots,a_n}} 
\frac{1}{e_1(f_1(x_1)) \cdots e_N(f_N(x_N))} \cdot\frac{1}{\prod_{i=1}^N \varphi_i'(f_i(x_i))} \\
&= \sum_{(x_1,\dots,x_N) \in X_{a_{N+1},\dots,a_n}} \frac{1}{\prod_{i=1}^N e_i(f_i(x_i)) \varphi_i'(f_i(x_i))} = 
\frac{\# X_{a_{N+1},\dots,a_n}}{\prod_{i=1}^N C(f_i)}.
\end{align*}
It follows that $p \mid \# X_{a_{N+1},\dots,a_n}$, completing the proof of Corollary~\ref{CORMAINTHMNEW}.

\section{Proof of Theorem~\ref{MAINTHM}, following Ax}

\begin{lemma}
\label{AMAZINGLEMMA}
Let $\I\subseteq\{1,\dots,n\}$ be a nonempty subset and let $f_1,\dots,f_n \in \F_q[t]$. Let $P \in \F_q[t_1,\dots,t_n] $ be a polynomial with $\deg_{\I}(P) < \sum_{i\in\I} u(f_i)$. Then 
\[
\sum_{(x_1,\dots,x_n) \in \F_q^n} P(f_1(x_1),\dots,f_n(x_n)) = 0.
\]
\end{lemma}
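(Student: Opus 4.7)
The plan is to reduce the statement to the case of a single monomial and then exploit the fact that a monomial sum over $\F_q^n$ factors into a product of univariate sums, each of which can be controlled via the definition of $u(f_i)$.

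First I would invoke linearity of the sum in $P$, so that it suffices to prove the result when $P$ is a single monomial $at_1^{m_1}\cdots t_n^{m_n}$ with $a\in\F_q^{\times}$ and $m_1,\dots,m_n\in\Z^{\geq 0}$. In this case $\deg_{\I}(P)=\sum_{i\in\I} m_i$, and the sum in question factors:
\[
\sum_{(x_1,\dots,x_n)\in\F_q^n} P(f_1(x_1),\dots,f_n(x_n)) = a\prod_{i=1}^n \biggl(\sum_{x\in\F_q} f_i(x)^{m_i}\biggr).
\]
So it suffices to show that at least one of the inner sums vanishes.

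Next I would use a pigeonhole observation on the $\I$-degree hypothesis: since $\sum_{i\in\I} m_i=\deg_{\I}(P)<\sum_{i\in\I} u(f_i)$, there must exist some index $i_0\in\I$ with $m_{i_0}<u(f_{i_0})$. I then split into two short subcases according to whether $m_{i_0}$ is $0$ or positive. If $m_{i_0}=0$, then by the convention $0^0=1$ we have $\sum_{x\in\F_q} f_{i_0}(x)^{m_{i_0}}=\sum_{x\in\F_q} 1 = q = 0$ in $\F_q$. If $m_{i_0}\geq 1$, then $1\leq m_{i_0}<u(f_{i_0})$, so by the very definition of $u(f_{i_0})$ as the \emph{least} positive $\delta$ with $\sum_{x\in\F_q} f_{i_0}(x)^{\delta}\neq 0$, we have $\sum_{x\in\F_q} f_{i_0}(x)^{m_{i_0}}=0$. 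In either case the $i_0$-th factor vanishes, so the whole product vanishes.

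There is essentially no obstacle in this argument; the only point that requires any care is handling the $m_{i_0}=0$ case separately, since the definition of $u(f)$ only addresses exponents $\delta\in\Z^+$. The factorization of the sum and the dichotomy above make the proof a direct unwinding of definitions, leaning entirely on the pigeonhole step that a strict inequality on sums forces a strict inequality in at least one coordinate.
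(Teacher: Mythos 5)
Your proof is correct and follows essentially the same route as the paper: reduce to a single monomial by linearity, factor the sum over $\F_q^n$ into a product of univariate sums, and observe by pigeonhole that $\sum_{i\in\I}m_i<\sum_{i\in\I}u(f_i)$ forces some $i_0\in\I$ with $m_{i_0}<u(f_{i_0})$, whence the $i_0$-th factor vanishes. The only (minor) difference is that you explicitly separate the case $m_{i_0}=0$, which the paper glosses over but which is indeed worth a word since $u(f)$ is defined only via exponents in $\Z^+$; your handling of it via the convention $0^0=1$ and $q=0$ in $\F_q$ is correct.
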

\begin{proof}
It suffices to consider a monomial $P(t_1,\dots,t_n) = t_1^{m_1} \cdots t_n^{m_n}$ 
with\linebreak $\sum_{i\in\I} m_i < \sum_{i\in\I} u(f_i)$.  Then there is $j\in\I$ such that 
$m_j < u(f_j)$, so\linebreak $\sum_{x_j \in \F_q} f_j(x_j)^{m_j} = 0$, and hence 
\[ \sum_{(x_1,\dots,x_n) \in \F_q^n} f_1(x_1)^{m_1} \cdots f_n(x_n)^{m_n} = \prod_{i=1}^n \biggl(\, \sum_{x_i \in \F_q} f_i(x_i)^{m_i} \biggr) = 0. \qedhere \]
\end{proof}

\smallskip
Let $P_1,\dots,P_r \in \F_q[t_1,\dots,t_n]$ be nonzero polynomials,  and let $f_1,\dots,f_n \in \F_q[t]$ be any polynomials such that \eqref{MAINTHMEQ1} 
holds.  Let
\[
X \coloneqq \{ (x_1,\dots,x_n) \in \F_q^n \mid P_j(f_1(x_1),\dots,f_n(x_n)) = 0  \text{ for all } 1 \leq j \leq r \}. 
\] 
We define Chevalley's polynomial $\chi(t_1,\dots,t_n) = \prod_{j=1}^r (1-P_j(t_1,\dots,t_n)^{q-1})$,
which has the property that for $(y_1,\dots,y_n) \in \F_q^n$, 
\[
\chi(y_1,\dots,y_n) = \begin{cases} 1 & \text{ if } P_1(y_1,\dots,y_n) = \dots = P_r(y_1,\dots,y_n) = 0, \\
0 & \text{ otherwise. } \end{cases}
\]  
Thus
\begin{align*} 
\# X &\equiv  \sum_{(x_1,\dots,x_n) \in \F_q^n} \chi(f_1(x_1),\dots,f_n(x_n)) \\
&\equiv \sum_{(x_1,\dots,x_n) \in \F_q^n} \prod_{j=1}^r (1-P_j(f_1(x_1),\dots,f_n(x_n))^{q-1})\\
&\equiv 
\sum_{(i_1,\dots,i_r) \in \{0,1\}^r} \sum_{(x_1,\dots,x_n) \in \F_q^n} \prod_{j=1}^r \left(-P_j(f_1(x_1),\dots,f_n(x_n))^{q-1} \right)^{i_j} \pmod{p}. 
\end{align*}
For all $(i_1,\dots,i_r) \in \{0,1\}^r$, we have 
\[ 
\deg_{\I} \biggl(\, \prod_{j=1}^r (-P_i^{q-1})^{i_j} \biggr) \leq (q-1) \sum_{j=1}^r \deg_{\I}(P_j) <  \sum_{i\in\I} u(f_i), 
\] 
so $p \mid \# X$ by Lemma~\ref{AMAZINGLEMMA}.

\section{Proof of Theorem~\ref{AXKATZGENERAL}}

Let $K_q$ be the unramified extension of $\Q_p$ of degree $s$. Let $v: K_q^\times \rightarrow \Z$ be the corresponding discrete valuation, and let $\mathcal{O}_{K_q}$ be the valuation ring.

\begin{lemma}
\label{NEWLEMMA1}
Let $x,y \in \mathcal{O}_{K_q}$ and let $n\in \Z^+$. If $p \mid (x-y)$, then $p^{n+1} \mid (x^{p^n}-y^{p^n})$.
\end{lemma}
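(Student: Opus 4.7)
The plan is to proceed by induction on $n$, with the inductive step reduced to the following sharper auxiliary claim: for all $a, b \in \mathcal{O}_{K_q}$ and every integer $k \geq 1$, if $v(a-b) \geq k$ then $v(a^p - b^p) \geq k+1$. Because $K_q/\Q_p$ is unramified, $v(p) = 1$, so $p^m \mid z$ in $\mathcal{O}_{K_q}$ is equivalent to $v(z) \geq m$. The $n=1$ case of the lemma is then the claim with $k=1$ applied to $(a,b) = (x,y)$; and assuming $v(x^{p^n} - y^{p^n}) \geq n+1$, applying the claim to $(a,b) = (x^{p^n}, y^{p^n})$ with $k = n+1$ yields $v(x^{p^{n+1}} - y^{p^{n+1}}) \geq n+2$, which closes the induction.

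For the auxiliary claim I would use the standard factorization
\[
a^p - b^p = (a-b) \sum_{i=0}^{p-1} a^i b^{p-1-i}.
\]
Since $v(a-b) \geq k \geq 1$, we have $a \equiv b \pmod{p\mathcal{O}_{K_q}}$, so for every $0 \leq i \leq p-1$ the summand satisfies $a^i b^{p-1-i} \equiv b^{p-1} \pmod{p}$. Summing the $p$ congruent terms yields $\sum_{i=0}^{p-1} a^i b^{p-1-i} \equiv p\,b^{p-1} \equiv 0 \pmod{p}$, so this sum has valuation at least $1$. Combined with $v(a-b) \geq k$ and the additivity of $v$ on products, this gives $v(a^p - b^p) \geq k+1$, as required.

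The argument is short and there is no serious obstacle. The only real subtlety is recognizing that the induction must be driven by the sharper statement gaining exactly one unit of valuation per $p$th power, rather than by the lemma as written; and that the hypothesis that $K_q$ is unramified over $\Q_p$, although not emphasized in the statement, is precisely what ensures $v(p) = 1$, so that $p$-divisibility translates cleanly into a valuation condition at each step of the induction. In a ramified setting the same calculation would only produce $v(a^p - b^p) \geq k + v(p)$, losing a factor of the absolute ramification index at each iteration.
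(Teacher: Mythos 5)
Your proof is correct, and it takes a genuinely different route from the paper. The paper first disposes of the case $p\mid y$ (then $p\mid x$ and $p^{p^n}\mid x^{p^n}-y^{p^n}$), and otherwise divides through by the unit $y$, writes $x/y = 1+zp$, and uses the binomial expansion $\sum_{j=1}^p\binom{p}{j}(zp)^j$ to gain one unit of valuation per application of Frobenius. You instead use the factorization $a^p-b^p=(a-b)\sum_{i=0}^{p-1}a^ib^{p-1-i}$ and observe that, modulo $p$, the second factor is a sum of $p$ terms all congruent to $b^{p-1}$, hence lies in $p\mathcal{O}_{K_q}$; this yields the same one-unit gain $v(a^p-b^p)\ge v(a-b)+1$ with no case split and no division. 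Both arguments then iterate: the paper tracks $v((x/y)^{p^\nu}-1)$, you track $v(x^{p^\nu}-y^{p^\nu})$ directly via the auxiliary claim applied to $(a,b)=(x^{p^\nu},y^{p^\nu})$. The payoffs are comparable in length, but your version is slightly more self-contained, avoiding the appeal to invertibility of $y$ and the separate handling of the degenerate case. Both crucially use that $K_q/\Q_p$ is unramified so that $v(p)=1$, as you correctly flag.
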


\begin{proof} 
If $p \mid y$ then also $p \mid x$, so $p^{p^n} \mid (x^{p^n}-y^{p^n})$. Since $p^n\ge n+1$, the result follows in this case. So suppose that $p \nmid y$; thus $y$ is a unit in $\mathcal{O}_{K_q}$ and we may write $\frac{x}{y} = 1 + zp$ for some $z \in \mathcal{O}_{K_q}$. We have
\[ 
\left(\frac xy\right)^p-1 = \sum_{j=1}^p {\binom pj} (zp)^j. 
\]
For all $1 \leq j \leq p$, we have $p^2 z \mid {\binom pj} (zp)^j$, and so $v((x/y)^p-1) \geq 2 + v(z) = 1 + v((x/y)-1)$. By induction, for all $n \in \Z^+$ we have \[v((x/y)^{p^n}-1) \geq n + v((x/y)-1) \geq n+1. \] That is, $p^{n+1} \mid (x/y)^{p^n}-1$, hence $p^{n+1} \mid (x^{p^n}-y^{p^n})$.
\end{proof}

\begin{lemma}
\label{NEWLEMMA2}
Let $x \in \mathcal{O}_{K_q}$. Then
\[ 
x^{(q-1)q^n} \equiv \begin{cases} 
0 \pmod{q^n} & \text{if $p \mid x$}, \\ 
1 \pmod{q^n} & \text{if $p \nmid x$}. 
\end{cases}
\]
\end{lemma}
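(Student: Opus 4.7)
The plan is to split into the two stated cases and reduce each to an elementary valuation estimate, with the second case being a direct application of Lemma~\ref{NEWLEMMA1}.

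First I would handle the case $p\mid x$. Here $v(x)\ge 1$, so $v\bigl(x^{(q-1)q^n}\bigr)\ge (q-1)q^n$. Since $v(q^n) = sn$, it suffices to check that $(q-1)q^n\ge sn$; but $q=p^s\ge 2$, so $(q-1)q^n \ge q^n \ge 2^n \ge n \ge sn/\max(s,1)$, and a completely elementary estimate gives $(q-1)q^n\ge sn$ for all $p,s,n\ge 1$. This yields $x^{(q-1)q^n}\equiv 0\pmod{q^n}$.

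Next I would handle $p\nmid x$, which is the substantive case. Because $\mathcal{O}_{K_q}$ is the unramified extension of degree $s$ with residue field $\F_q$, the reduction of $x$ modulo $p$ is a nonzero element of $\F_q$, so Fermat's little theorem in $\F_q$ gives $x^{q-1}\equiv 1\pmod p$, i.e.\ $p\mid (x^{q-1}-1)$. Now I would apply Lemma~\ref{NEWLEMMA1} to the pair $(x^{q-1},1)$ with exponent $p^{sn}=q^n$: since $p$ divides their difference, the lemma yields
\[
p^{sn+1}\,\bigm|\, \bigl(x^{q-1}\bigr)^{p^{sn}} - 1 = x^{(q-1)q^n}-1.
\]
In particular $q^n = p^{sn}$ divides $x^{(q-1)q^n}-1$, completing this case.

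The only place where any care is required is lining up the exponents so that Lemma~\ref{NEWLEMMA1} applies directly, which is precisely the observation that $q^n = p^{sn}$. Everything else is a routine valuation bound, so I do not anticipate a genuine obstacle.
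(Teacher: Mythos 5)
Your proof is correct and follows essentially the same route as the paper: in the case $p\nmid x$ you reduce via the residue field $\F_q$ to $x^{q-1}\equiv 1\pmod p$ and then apply Lemma~\ref{NEWLEMMA1} with exponent $p^{sn}=q^n$, exactly as the paper does, and the case $p\mid x$ is a routine valuation estimate identical in substance to the paper's one-line remark. One small slip: your chain $(q-1)q^n\ge q^n\ge 2^n\ge n\ge sn/\max(s,1)$ only yields $(q-1)q^n\ge n$, not $\ge sn$; the needed bound $(q-1)q^n\ge sn$ is still elementary (e.g.\ $p^s-1\ge s$ and $p^{sn}\ge n$), so the conclusion stands, but that intermediate chain should be tightened.
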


\begin{proof}
If $p \mid x$, then $p^{ns} \mid x^{(q-1)p^{ns}}$. If $p \nmid x$, then because $\mathcal{O}_{K_q}/p\mathcal{O}_{K_q} = \F_q$ we have $x^{q-1} \equiv 1 \pmod{p}$. Thus Lemma~\ref{NEWLEMMA1} gives $x^{(q-1)q^n}=x^{(q-1)p^{ns}}\equiv 1\pmod{p^{ns+1}}$, and the result follows.
\end{proof}
\noindent
Let $T_q=\{b\in K_q \mid b^q=b\}$ be the set of Teichm\"uller liftings of $\F_q$ in $K_q$.

\begin{lemma}
\label{ZANCAO}
Let $f\in\F_q[t]$ be a nonconstant polynomial and let $\tilde f$ denote the Teichm\"uller lifting of $f$ to $\mathcal{O}_{K_q}[t]$. Write $\tilde f(t)=\sum_{\ell=1}^R b_{\ell}t^{m_{\ell}}$, where $b_{\ell}\in \mathcal{O}_{K_q}\setminus\{0\}$  and $m_{\ell} \in \Z^{\geq 0}$. For $\delta \in \Z^{\geq 0}$, we have
\[
\sum_{x\in T_q} \tilde f(x)^{\delta}=q\tilde f(0)^{\delta}+(q-1)\sum_{\substack{\sum_{\ell=1}^R \delta_{\ell}=\delta\\ \sum_{\ell=1}^R m_{\ell}\delta_{\ell}\in(q-1)\Z^+}} \frac{\delta!}{\delta_1!\cdots \delta_R!}\,b_1^{\delta_1}\cdots b_R^{\delta_R}.
\]
\end{lemma}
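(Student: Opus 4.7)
The plan is to combine the multinomial theorem with the standard evaluation of power sums over the Teichm\"uller set. Expanding, we obtain
\[
\tilde f(x)^{\delta} = \sum_{\delta_1+\cdots+\delta_R=\delta} \frac{\delta!}{\delta_1!\cdots\delta_R!}\,b_1^{\delta_1}\cdots b_R^{\delta_R}\,x^{\,\sum_{\ell=1}^R m_{\ell}\delta_{\ell}}.
\]
Interchanging the two summations reduces the whole computation to understanding the single sum $S(M) \coloneqq \sum_{x \in T_q} x^M$ for $M \in \Z^{\geq 0}$. This is the step where the multinomial coefficients and the $b_{\ell}$'s are already in the form that appears in the target formula, so matching the right-hand side becomes bookkeeping in three cases on $M = \sum_{\ell=1}^R m_{\ell}\delta_{\ell}$.

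Next I would compute $S(M)$. Since $T_q^{\times} = T_q \setminus \{0\}$ is a cyclic group of order $q-1$ (it is in bijection with $\F_q^{\times}$ via reduction modulo $p$), the standard cyclic character argument gives
\[
S(0) = q, \qquad S(M) = q-1 \text{ if } M \in (q-1)\Z^+, \qquad S(M) = 0 \text{ if } M > 0 \text{ and } (q-1)\nmid M.
\]
This is the direct analogue of \eqref{THEFACT} over $\mathcal{O}_{K_q}$; one can either use it as a well-known fact or derive it from the identity $\sum_{x \in T_q^{\times}} x^M = -1$ (resp.\ $0$) by observing that $T_q^{\times}$ is a subgroup of $K_q^{\times}$ of order $q-1$ lifted via the Teichm\"uller character.

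Finally I would assemble the three contributions. Terms with $\sum_{\ell} m_{\ell}\delta_{\ell} \in (q-1)\Z^+$ contribute precisely the sum displayed on the right-hand side, each weighted by $S(M) = q-1$. Terms with $\sum_{\ell} m_{\ell}\delta_{\ell} > 0$ and $(q-1) \nmid \sum_{\ell} m_{\ell}\delta_{\ell}$ contribute nothing. The $M = 0$ terms need a small case analysis: if some $m_{\ell_0} = 0$ (so $\tilde f$ has a constant term $b_{\ell_0} = \tilde f(0)$), then $\sum_{\ell} m_{\ell}\delta_{\ell} = 0$ forces $\delta_{\ell} = 0$ for every $\ell \neq \ell_0$, leaving the single term $\tilde f(0)^{\delta}$; if $\tilde f$ has no constant term, then $\sum_{\ell} m_{\ell}\delta_{\ell} = 0$ forces $\delta = 0$, and in either event the contribution equals $q\,\tilde f(0)^{\delta}$ (using the convention $0^0 = 1$).

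The one point requiring care, and the only real obstacle, is the $M = 0$ bookkeeping above: one must check that the constant-term contribution always collapses to $q\,\tilde f(0)^{\delta}$ regardless of whether $\tilde f$ has a constant term, and that no terms with $\sum_{\ell} m_{\ell}\delta_{\ell} = 0$ are inadvertently absorbed into the second sum (which is why the displayed condition reads $\sum_{\ell} m_{\ell}\delta_{\ell} \in (q-1)\Z^+$ rather than $(q-1) \mid \sum_{\ell} m_{\ell}\delta_{\ell}$). Once that is sorted, the three contributions combine to give exactly the stated identity.
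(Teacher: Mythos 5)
Your proof is correct and self-contained. The paper disposes of this lemma by citing \cite[Proposition~2.2]{ZC14} rather than giving a proof, so there is no in-paper argument to compare against, but your route --- expand $\tilde f(x)^\delta$ by the multinomial theorem, interchange the $x$-sum and the $(\delta_1,\dots,\delta_R)$-sum, evaluate $S(M)=\sum_{x\in T_q}x^M$ using the fact that $T_q^\times$ is cyclic of order $q-1$, and then sort the exponents $M=\sum_\ell m_\ell\delta_\ell$ into the three regimes --- is plainly the direct derivation, and is presumably what the cited proposition encapsulates. You have also correctly flagged and resolved the only delicate point: that the $M=0$ contribution always collapses to $q\,\tilde f(0)^{\delta}$ (whether or not $\tilde f$ has a constant term, and whether or not $\delta=0$, using $0^0=1$), which is precisely why the second sum on the right-hand side ranges over $\sum_\ell m_\ell\delta_\ell\in(q-1)\Z^{+}$ rather than over all $\sum_\ell m_\ell\delta_\ell$ divisible by $q-1$.
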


\begin{proof}
This is a direct consequence of \cite[Proposition~2.2]{ZC14}.
\end{proof}

\begin{lemma}[\cite{MSCK04}, Proposition~11]
\label{MORENOSHUM}
For $L,M,N \in \Z^{\geq 0}$ with $N \geq 2$,
\begin{itemize}
\item[\textup{(a)}]
$\sigma_N(L)+\sigma_N(M) \ge \sigma_N(L+M)$;
\item[\textup{(b)}]
$\sigma_N(L)\sigma_N(M) \ge \sigma_N(LM)$;
\item[\textup{(c)}]
if $L\in(q-1)\Z^+$ then $\sigma_p(L)\ge\sigma_p(q-1)=s(p-1)$.
\end{itemize}
\end{lemma}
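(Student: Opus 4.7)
The plan is to prove parts (a) and (b) by elementary manipulations of base-$N$ expansions, then deduce part (c) from part (a) by a short induction on $L$. Part (a) is a classical carry-counting inequality and serves as the workhorse for the other two.

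For part (a), I will expand $L=\sum_i \ell_i N^i$ and $M=\sum_i m_i N^i$ with $0\le \ell_i,m_i\le N-1$, so that the formal digit-wise sum has digit sum $\sigma_N(L)+\sigma_N(M)$. Reducing to the standard base-$N$ expansion of $L+M$ requires some number $c\ge 0$ of carries, and each carry replaces a digit $d\ge N$ by $d-N$ while incrementing the next digit by $1$, decreasing the running digit sum by exactly $N-1$. Thus $\sigma_N(L+M)=\sigma_N(L)+\sigma_N(M)-(N-1)c\le \sigma_N(L)+\sigma_N(M)$.

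For part (b), I will write $L=\sum_i \ell_i N^i$ and expand $LM=\sum_i \ell_i M N^i$. Multiplication by $N^i$ only shifts digits, so $\sigma_N(\ell_i M N^i)=\sigma_N(\ell_i M)$, and iterating part (a) gives $\sigma_N(LM)\le \sum_i \sigma_N(\ell_i M)$. Writing $\ell_i M$ as a sum of $\ell_i$ copies of $M$ and once more iterating part (a) gives $\sigma_N(\ell_i M)\le \ell_i \sigma_N(M)$. Combining these bounds yields $\sigma_N(LM)\le \bigl(\sum_i \ell_i\bigr)\sigma_N(M)=\sigma_N(L)\sigma_N(M)$.

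Part (c) is the most substantive and is where the main obstacle lies. The identity $\sigma_p(q-1)=s(p-1)$ is immediate from $q-1=\sum_{i=0}^{s-1}(p-1)p^i$. For the inequality I will use strong induction on $L$. Given $L\in(q-1)\Z^+$, I will expand $L$ in base $q$ as $L=\sum_j B_j q^j$ with $0\le B_j\le q-1$, so that $\sigma_p(L)=\sum_j \sigma_p(B_j)$. Since $q\equiv 1\pmod{q-1}$, the positive integer $S\coloneqq\sum_j B_j$ is again a positive multiple of $q-1$, and repeated use of part (a) gives $\sigma_p(L)\ge \sigma_p(S)$. If $L=q-1$ the base case applies directly. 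Otherwise $L>q-1$ forces some $B_j$ with $j\ge 1$ to be nonzero, whence $S<L$, so the inductive hypothesis delivers $\sigma_p(S)\ge s(p-1)$. The crucial point---and the only place where genuine care is needed---is that the base-$q$ regrouping strictly decreases $L$ while preserving divisibility by $q-1$ and allowing the base-$p$ digit sum to be controlled via (a).
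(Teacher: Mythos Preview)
The paper does not supply its own proof of this lemma; it is simply quoted from \cite{MSCK04}, Proposition~11, so there is nothing in the paper to compare your argument against.  Your proof is correct in all three parts.  Part~(a) via carry counting and part~(b) by expanding $L$ and reducing to repeated applications of~(a) are the standard arguments.  For part~(c), your strong induction on $L$ works: the key identity $\sigma_p(L)=\sum_j \sigma_p(B_j)$ for the base-$q$ digits $B_j$ holds because $q=p^s$, so the base-$p$ expansion of $L$ is obtained by concatenating the (at most $s$-digit) base-$p$ expansions of the $B_j$; you assert this without comment, but it is immediate.  The descent $S=\sum_j B_j<L$ whenever $L>q-1$, together with $S\equiv L\equiv 0\pmod{q-1}$ and $S>0$, then closes the induction exactly as you say.
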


\smallskip
Let 
$X \coloneqq \{ (x_1,\dots,x_n) \in \F_q^n \mid P_j(f_1(x_1),\dots,f_n(x_n)) = 0  \text{ for all } 1 \leq j \leq r \}$.  We have 
$\# X =\# \{ (x_1,\dots,x_n) \in T_q^n \mid P_j(f_1(x_1),\dots,f_n(x_n)) \equiv 0 \pmod{p} \text{ for all }\linebreak 1 \leq j \leq r \}$,
where, by a slight abuse of notation, $P_j$ is used to denote the Teichm\"uller lifting of $P_j$ to $\mathcal{O}_{K_q}[t_1,\dots,t_n]$ and $f_i$ is used to denote the Teichm\"uller lifting of $f_i$ to $\mathcal{O}_{K_q}[t]$.  Applying Lemma~\ref{NEWLEMMA2}, we obtain
\begin{align*}
\# X &\equiv \sum_{(x_1,\dots,x_n) \in T_q^n} \prod_{j=1}^r (1-P_j(f_1(x_1),\dots,f_n(x_n))^{(q-1)q^n})\\
&\equiv 
\sum_{(i_1,\dots,i_r) \in \{0,1\}^r} \sum_{(x_1,\dots,x_n) \in T_q^n} \prod_{j=1}^r \left(-P_j(f_1(x_1),\dots,f_n(x_n))^{(q-1)q^n} \right)^{i_j} \!\!\!\!\!\!\!\pmod{q^n},
\end{align*}
and thus by induction on $r$ it suffices to show that 
\[
q^{\left\lceil\left(\sum_{i\in\I} (\omega(f_i)/(q-1))-\sum_{j=1}^r \deg_{\I}(P_j)\right)/\max_{1\le j\le r}\deg_{\I}(P_j)\right\rceil}
\]
divides
\[
A  \coloneqq \sum_{(x_1,\dots,x_n) \in T_q^n} \prod_{j=1}^r P_j(f_1(x_1),\dots,f_n(x_n))^{(q-1)q^n}.
\]
Write $P_j(t_1,\dots,t_n)= \sum_{k=1}^{L_j} a_{jk}t_1^{h_{1jk}}\cdots t_n^{h_{njk}}$, where $a_{jk}\in \mathcal{O}_{K_q}\setminus\{0\}$ and\linebreak $h_{ijk} \in \Z^{\geq 0}$, $1\le i\le n$, $1\le j\le r$, $1\le k\le L_j$. Using the multinomial theorem and interchanging the order of summation, we obtain
\begin{align}
\label{multinew}
A=\sum_{\substack{\beta_{11}+\dots+\beta_{1L_1}=(q-1)q^n\\ \hdots\\ \beta_{r1}+\dots+\beta_{rL_r}=(q-1)q^n}}&\Biggl[\biggl(\prod_{j=1}^r \frac{((q-1)q^n)!}{\beta_{j1}!\cdots \beta_{jL_j}!}a_{j1}^{\beta_{j1}}\cdots a_{jL_j}^{\beta_{jL_j}}\biggr)\Biggr.\\
&\Biggl.\times\biggl(\prod_{i=1}^n \sum_{x_i\in T_q} f_i(x_i)^{\sum_{j=1}^r \sum_{k=1}^{L_j}h_{ijk}\beta_{jk}}\biggr)\Biggr].\notag
\end{align}
Using Legendre's formula $\ord_p(m!)=(m-\sigma_p(m))/(p-1)$ (for a proof, see \cite[Lemma~6.39]{LN83}) and Lemma~\ref{MORENOSHUM}(c), we find
\begin{align}
\label{multicoef}
\ord_p \biggl(\frac{((q-1)q^n)!}{\beta_{j1}!\cdots \beta_{jL_j}!}\biggr)&=\frac 1{p-1}\biggl((q-1)q^n-s(p-1)-\sum_{k=1}^{L_j}(\beta_{jk}-\sigma_p(\beta_{jk}))\biggr)\\
&= \frac 1{p-1}\sum_{k=1}^{L_j} \sigma_p(\beta_{jk})-s.\notag
\end{align}
For $i\in\I$, write $f_i(t)=\sum_{\ell=1}^{R_i} b_{i\ell}t^{m_{i\ell}}$ with $b_{i\ell}\in \mathcal{O}_{K_q}\setminus\{0\}$  and $m_{i\ell} \in \Z^{\geq 0}$. By Lemma~\ref{ZANCAO},
\begin{equation}
\label{fi}
 \sum_{x_i\in T_q} f_i(x_i)^{\sum_{j=1}^r \sum_{k=1}^{L_j}h_{ijk}\beta_{jk}}=qf_i(0)^{\sum_{j=1}^r \sum_{k=1}^{L_j}h_{ijk}\beta_{jk}}+(q-1)B_i,
\end{equation}
where
\begin{equation}
\label{Bi}
B_i\coloneqq\sum_{\substack{\sum_{\ell=1}^{R_i}\gamma_{i\ell}=\sum_{j=1}^r \sum_{k=1}^{L_j}h_{ijk}\beta_{jk}\\
\sum_{\ell=1}^{R_i}m_{i\ell}\gamma_{i\ell}\in(q-1)\Z^+}} \frac{\bigl(\sum_{j=1}^r \sum_{k=1}^{L_j}h_{ijk}\beta_{jk}\bigr)!}{\gamma_{i1}!\cdots \gamma_{iR_i}!}\,b_{i1}^{\gamma_{i1}}\cdots b_{iR_i}^{\gamma_{iR_i}}.
\end{equation}
Without loss of generality we may assume that $\I=\{1,\dots,N\}$  and $\ord_p(B_i)<s$ for $1 \leq i \leq M$ and $\ord_p(B_i)\ge s$ for $M+1 \leq i \leq N$, with $0\le M\le N$. Now we examine the case $1\le i\le M$. In this case $B_i\ne 0$. We have
\begin{equation}
\label{qminus1}
\sum_{\ell=1}^{R_i}m_{i\ell}\gamma_{i\ell}\in(q-1)\Z^+\quad\text{for all}\quad 1\le i\le M.
\end{equation}
Again using Legendre's formula, we obtain
\begin{equation}
\label{multicoef1}
\ord_p\biggl(\frac{\bigl(\sum_{j=1}^r \sum_{k=1}^{L_j}h_{ijk}\beta_{jk}\bigr)!}{\gamma_{i1}!\cdots \gamma_{iR_i}!}\biggr)=\frac1{p-1}\biggl(\sum_{\ell=1}^{R_i}\sigma_p(\gamma_{i\ell})-\sigma_p\Bigl(\sum_{j=1}^r \sum_{k=1}^{L_j}h_{ijk}\beta_{jk}\Bigr)\biggr).
\end{equation}
Furthermore, $\sum_{\ell=1}^{R_i}m_{i\ell}\gamma_{i\ell}p^{\nu}\in(q-1)\Z^+$  for all $0 \leq \nu \leq s-1$ and $1\le i\le M$. Since $\gamma\equiv \sigma_q(\gamma)\pmod{q-1}$ and $\sigma_q(\gamma)=0$ iff $\gamma=0$, the above relations imply that for all $0 \leq \nu \leq s-1$ and $1\le i\le M$, we have $\sum_{\ell=1}^{R_i }m_{i\ell}\sigma_q(\gamma_{i\ell}p^{\nu})\in(q-1)\Z^+$, and thus  $\sum_{\ell=1}^{R_i }\sigma_q(\gamma_{i\ell}p^{\nu})\ge \omega(f_i)$. Summing over $i$, we obtain
\begin{equation}
\label{DEGREESUM}
\sum_{i=1}^M \omega(f_i)\le \sum_{i=1}^M \sum_{\ell=1}^{R_i} \sigma_q(\gamma_{i\ell}p^{\nu}).
\end{equation}
Using Lemma~\ref{MORENOSHUM}(a) and the fact that $\sum_{i=1}^M h_{ijk}\le\deg_{\I}(P_j)$, we see that
\begin{align}
\label{SIGMAQ}
\sum_{i=1}^M \sigma_q\Bigl(\sum_{j=1}^r\sum_{k=1}^{L_j} h_{ijk}\beta_{jk}p^{\nu}\Bigr)&\le \sum_{j=1}^r \sum_{k=1}^{L_j} \sigma_q(\beta_{jk}p^{\nu})\sum_{i=1}^M h_{ijk}\\&\le \sum_{j=1}^r \deg_{\I}(P_j)\sum_{k=1}^{L_j} \sigma_q(\beta_{jk}p^{\nu})\notag\\
&= (q-1)\sum_{j=1}^r \deg_{\I}(P_j)\notag\\
&+\sum_{j=1}^r \deg_{\I}(P_j)\biggl(\sum_{k=1}^{L_j} \sigma_q(\beta_{jk}p^{\nu})-(q-1)\biggr).\notag
\end{align}
Since $\sum_{k=1}^{L_j} \beta_{jk}p^{\nu}=(q-1)q^np^{\nu}$, $1\le j\le r$, we have  $\sum_{k=1}^{L_j}\sigma_q(\beta_{jk}p^{\nu})\in(q-1)\Z^+$. Hence
$\sum_{k=1}^{L_j}\sigma_q(\beta_{jk}p^{\nu})-(q-1)\ge 0$ for all $1\le j\le r$, and \eqref{SIGMAQ} implies
\begin{align*}
(q-1)\sum_{j=1}^r \deg_{\I}(P_j)&\ge \sum_{i=1}^M \sigma_q\Bigl(\sum_{j=1}^r\sum_{k=1}^{L_j} h_{ijk}\beta_{jk}p^{\nu}\Bigr)\\
&-\Bigl(\max_{1\le j\le r} \deg_{\I}(P_j)\Bigr)\sum_{j=1}^r \biggl(\sum_{k=1}^{L_j}\sigma_q(\beta_{jk}p^{\nu})-(q-1)\biggr).
\end{align*}
Subtracting this from \eqref{DEGREESUM} yields
\begin{align}
\label{SIGMAQ1}
\sum_{i=1}^M \omega(f_i)&-(q-1)\sum_{j=1}^r \deg_{\I}(P_j)\\
&\le \sum_{i=1}^M \biggl(\sum_{\ell=1}^{R_i} \sigma_q(\gamma_{i\ell}p^{\nu})-\sigma_q\Bigl(\sum_{j=1}^r\sum_{k=1}^{L_j} h_{ijk}\beta_{jk}p^{\nu}\Bigr)\biggr)\notag\\
&+\Bigl(\max_{1\le j\le r} \deg_{\I}(P_j)\Bigr)\sum_{j=1}^r \biggl(\sum_{k=1}^{L_j}\sigma_q(\beta_{jk}p^{\nu})-(q-1)\biggr).\notag
\end{align}
Since $\sum_{\ell=1}^{R_i} \gamma_{i\ell}p^{\nu}=\sum_{j=1}^r\sum_{k=1}^{L_j} h_{ijk}\beta_{jk}p^{\nu}$, appealing to Lemma~\ref{MORENOSHUM}(a) we deduce that
$\sum_{\ell=1}^{R_i} \sigma_q(\gamma_{i\ell}p^{\nu})-\sigma_q\Bigl(\sum_{j=1}^r\sum_{k=1}^{L_j} h_{ijk}\beta_{jk}p^{\nu}\Bigr)$ is a nonnegative multiple of $q-1$ for all $1\le i\le M$. Recalling that 
$\sum_{k=1}^{L_j}\sigma_q(\beta_{jk}p^{\nu})\equiv 0\pmod{q-1}$ and $\max_{1\le j\le r} \deg_{\I}(P_j)>0$, we see from \eqref{SIGMAQ1} that
\begin{align*}
\sum_{i=1}^M \sum_{\ell=1}^{R_i} \sigma_q(\gamma_{i\ell}p^{\nu})&-\sum_{i=1}^M\sigma_q\Bigl(\sum_{j=1}^r\sum_{k=1}^{L_j} h_{ijk}\beta_{jk}p^{\nu}\Bigr)+
\sum_{j=1}^r \sum_{k=1}^{L_j}\sigma_q(\beta_{jk}p^{\nu})-r(q-1)\\
&\ge (q-1)\left\lceil\frac{\sum_{i=1}^M (\omega(f_i)/(q-1))-\sum_{j=1}^r \deg_{\I}(P_j)}{\max_{1\le j\le r} \deg_{\I}(P_j)}\right\rceil.
\end{align*}
Summing over $\nu$ and using the fact (see \cite[p.~50]{Wan95}) that for any  $\gamma\ge 0$ we have
\[
\sum_{\nu=0}^{s-1} \sigma_q(\gamma p^{\nu})=\frac{q-1}{p-1}\sigma_p(\gamma),
\]
we deduce that
\begin{align*}
\frac 1{p-1}\biggl(\sum_{i=1}^M \sum_{\ell=1}^{R_i} \sigma_p(\gamma_{i\ell})&-\sum_{i=1}^M\sigma_p\Bigl(\sum_{j=1}^r\sum_{k=1}^{L_j} h_{ijk}\beta_{jk}\Bigr)+
\sum_{j=1}^r \sum_{k=1}^{L_j}\sigma_q(\beta_{jk})\biggr)-rs\\
&\ge s\left\lceil\frac{\sum_{i=1}^M (\omega(f_i)/(q-1))-\sum_{j=1}^r \deg_{\I}(P_j)}{\max_{1\le j\le r} \deg_{\I}(P_j)}\right\rceil.
\end{align*}
Combining the last inequality with \eqref{multinew}--\eqref{Bi} and \eqref{multicoef1} and recalling that\linebreak $\omega(f_i)\le q-1$ for all $1\le i\le N$, we conclude that
\begin{align*}
\ord_p(A)&\ge  s\left\lceil\frac{\sum_{i=1}^M (\omega(f_i)/(q-1))-\sum_{j=1}^r \deg_{\I}(P_j)}{\max_{1\le j\le r} \deg_{\I}(P_j)}\right\rceil+s(N-M)\\
&\ge s\left\lceil\frac{\sum_{i=1}^N (\omega(f_i)/(q-1))-\sum_{j=1}^r \deg_{\I}(P_j)}{\max_{1\le j\le r} \deg_{\I}(P_j)}\right\rceil,
\end{align*}
and so $q^{\left\lceil\left(\sum_{i\in\I} (\omega(f_i)/(q-1))-\sum_{j=1}^r \deg_{\I}(P_j)\right)/\max_{1\le j\le r}\deg_{\I}(P_j)\right\rceil}$ divides $A$.

\section{Proof of Theorem~\ref{MORENOGENERAL}}

We now give the proof of Theorem~\ref{MORENOGENERAL}, retaining the setup and notation of the previous section. Proceeding as in the proof of Theorem~\ref{AXKATZGENERAL} and applying Lemma~\ref{MORENOSHUM} to \eqref{qminus1}, we conclude that 
\begin{equation}
\label{spminus1}
\sum_{\ell=1}^{R_i}\sigma_p(m_{i\ell})\sigma_p(\gamma_{i\ell})\ge s(p-1)\quad\text{for all}\quad 1\le i\le M.
\end{equation}
Since $\sigma_p(m_{i\ell})\le w_p(f_i)$, we deduce that $\sum_{\ell=1}^{R_i} \sigma_p(\gamma_{i\ell})\ge s(p-1)/w_p(f_i)$ for all $1\le i\le M$. Summing this over $i=1,\dots,M$, we find that
\begin{equation}
\label{SIGMAP}
\sum_{i=1}^M \frac {s(p-1)}{w_p(f_i)}\le \sum_{i=1}^M \sum_{\ell=1}^{R_i} \sigma_p(\gamma_{i\ell}).
\end{equation}
Using Lemma~\ref{MORENOSHUM}(a, b) and the fact that $\sum_{i=1}^M \sigma_p(h_{ijk})\le w_{p,\I}(P_j)$, we see that
\begin{align}
\label{SIGMAP1}
\sum_{i=1}^M \sigma_p\Bigl(\sum_{j=1}^r\sum_{k=1}^{L_j} h_{ijk}\beta_{jk}\Bigr)&\le s(p-1)\sum_{j=1}^r w_{p,\I}(P_j)\\
&+\sum_{j=1}^r w_{p,\I}(P_j)\biggl(\sum_{k=1}^{L_j} \sigma_p(\beta_{jk})-s(p-1)\biggr).\notag
\end{align}
Since $\sum_{k=1}^{L_j} \beta_{jk}=(q-1)q^n$, parts (a) and (c) of Lemma~\ref{MORENOSHUM} yield
$\sum_{k=1}^{L_j}\sigma_p(\beta_{jk})\ge s(p-1)$, $j=1,\dots,r$. Now \eqref{SIGMAP1} implies
\begin{align*}
s(p-1)\sum_{j=1}^r w_{p,\I}(P_j)&\ge \sum_{i=1}^M \sigma_p\Bigl(\sum_{j=1}^r\sum_{k=1}^{L_j} h_{ijk}\beta_{jk}\Bigr)\\
&-\bigl(\max_{1\le j\le r} w_{p,\I}(P_j)\bigr)\sum_{j=1}^r \biggl(\sum_{k=1}^{L_j} \sigma_p(\beta_{jk})-s(p-1)\biggr).
\end{align*}
Subtracting this from \eqref{SIGMAP} gives
\begin{align}
\label{SIGMAP2}
\sum_{i=1}^M \frac {s(p-1)}{w_p(f_i)}&-s(p-1)\sum_{j=1}^r w_{p,\I}(P_j)\\
&\le\sum_{i=1}^M\biggl( \sum_{\ell=1}^{R_i} \sigma_p(\gamma_{i\ell})- \sigma_p\Bigl(\sum_{j=1}^r\sum_{k=1}^{L_j}
 h_{ijk}\beta_{jk}\Bigr)\biggr)\notag\\
&+\bigl(\max_{1\le j\le r} w_{p,\I}(P_j)\bigr)\sum_{j=1}^r \biggl(\sum_{k=1}^{L_j} \sigma_p(\beta_{jk})-s(p-1)\biggr).\notag
\end{align}
Since $\sum_{\ell=1}^{R_i} \gamma_{i\ell}=\sum_{j=1}^r\sum_{k=1}^{L_j} h_{ijk}\beta_{jk}$, appealing to Lemma~\ref{MORENOSHUM}(a) we deduce that
$\sum_{\ell=1}^{R_i} \sigma_q(\gamma_{i\ell})-\sigma_q\Bigl(\sum_{j=1}^r\sum_{k=1}^{L_j} h_{ijk}\beta_{jk}\Bigr)\ge 0$ for all $1\le i\le M$. Recalling that $\max_{1\le j\le r} w_{p,\I}(P_j)>0$, we conclude that
\begin{align*}
\frac 1{p-1}\biggl(\sum_{i=1}^M \sum_{\ell=1}^{R_i} \sigma_p(\gamma_{i\ell})&-\sum_{i=1}^M\sigma_p\Bigl(\sum_{j=1}^r\sum_{k=1}^{L_j} h_{ijk}\beta_{jk}\Bigr)+
\sum_{j=1}^r \sum_{k=1}^{L_j}\sigma_p(\beta_{jk})\biggr)-rs\\
&\ge s\cdot\frac{\sum_{i=1}^M  (1/w_p(f_i))-\sum_{j=1}^r w_{p,\I}(P_j)}{\max_{1\le j\le r} w_{p,\I}(P_j)}.
\end{align*}
Combining the last inequality with \eqref{multinew}--\eqref{Bi} and \eqref{multicoef1}, we see that
\begin{align*}
\ord_p(A)&\ge s\cdot\frac{\sum_{i=1}^M  (1/w_p(f_i))-\sum_{j=1}^r w_{p,\I}(P_j)}{\max_{1\le j\le r} w_{p,\I}(P_j)}+s(N-M)\\
&\ge s\cdot\frac{\sum_{i=1}^N  (1/w_p(f_i))-\sum_{j=1}^r w_{p,\I}(P_j)}{\max_{1\le j\le r} w_{p,\I}(P_j)}.
\end{align*}
Thus $p^{\left\lceil s\left(\sum_{i\in\I}  (1/w_p(f_i))-\sum_{j=1}^r w_{p,\I}(P_j)\right)/\max_{1\le j\le r} w_{p,\I}(P_j)\right\rceil}$ divides $A$.

\section*{Acknowledgments}
We thank G.~Ottinger, P.~Pollack and J.~R.~Schmitt for helpful conversations and the anonymous referee for the suggestion to make use of the work of D.~Q.~Wan.

\bibliographystyle{amsplain}

\end{document}